\def\bb#1\eb{\textcolor{blue}
{#1}} %
\def\br#1\er{\textcolor{red}
{#1}} %
\newcommand{\R}{\mathds R}
\title[Geodesics and Jacobi fields of pseudo-Finsler manifolds]{Geodesics and Jacobi fields of  pseudo-Finsler manifolds}
\author[M. A. Javaloyes]{Miguel Angel Javaloyes}
\address{Departamento de Matem\'aticas, \hfill\break\indent
Universidad de Murcia, \hfill\break\indent
Campus de Espinardo,\hfill\break\indent
30100 Espinardo, Murcia, Spain}
\email{majava@um.es}
\author[B. L. Soares]{Bruno Learth Soares}
\address{
Centro de Matem\'atica, Computa\c{c}\~ao e Cogni\c{c}\~ao, 
\hfill\break\indent
Universidade Federal do ABC (UFABC), \hfill\break\indent
Rua Santa Ad\'elia, 166 - 09210-170 - Santo Andr\'e (SP), Brazil}
\email{bruno.soares@ufabc.edu.br}
\date{}%03.04.2011}
\thanks{This work was partially supported by MINECO (Ministerio de Econom\'{\i}a y Competitividad) and FEDER (Fondo Europeo de Desarrollo Regional) project MTM2012-34037,   Regional J. Andaluc\'{\i}a Grant P09-FQM-4496 and Fundaci\'on S\'eneca project 18942/JLI/13. This  research has been supported by the programme ``Young leaders in research''  by Fundaci\'on S\'eneca, Regional Agency for Science and Technology from the Region of Murcia.  This work has been finsihed during a visit of the first author to the University of S\~ao Paulo supported by the Fapesp grant for visiting researchers with process 2013/1070-7. }
\thanks{2000 {\it Mathematics Subject Classification:} Primary 53C22, 53C50, 53C60, 58B20\\
\textbf{Key words:} Finsler, geodesics, Chern connection, Jacobi fields.}
\begin{document}
\newtheorem{thm}{Theorem}[section]
\newtheorem{prop}[thm]{Proposition}
\newtheorem{lemma}[thm]{Lemma}
\newtheorem{cor}[thm]{Corollary}
\theoremstyle{definition}
\newtheorem{defi}[thm]{Definition}
\newtheorem{notation}[thm]{Notation}
\newtheorem{exe}[thm]{Example}
\newtheorem{conj}[thm]{Conjecture}
\newtheorem{prob}[thm]{Problem}
\newtheorem{rem}[thm]{Remark}
\begin{abstract}
In this paper, we derive the first and the second variation of the energy functional for a pseudo-Finsler metric using the family of affine connections associated to the Chern connection. This opens the possibility to accomplish  computations with coordinate-free methods. %of Modern Differential Geometry.   
Using the second variation formula, we introduce the index form and 
 present  some properties of Jacobi fields. 
%% Finally we prove that lightlike geodesics and its focal points are preserved up to reparametrization by conformal changes.
\end{abstract}

\maketitle
\section{Introduction}

Geodesics and Jacobi fields are probably the most important geometrical 
elements associated to a Finsler metric. Even though they can be defined 
without using any connection, choosing  an appropriate connection 
associated to the Finsler metric can make  it easier to get some properties of 
them. In particular, the main goal of this paper is to use the Chern connection 
to deduce some of these properties under the approach developed by H.-H. Matthias in \cite[Definition 2.5]{Mat80}, where the Chern connection is interpreted as a family of affine connections, namely, for every vector field $V$ in an open subset $\Omega\subset M$, non-zero everywhere, we get an affine connection $\nabla^V$. This affine connection is torsion-free and almost $g$-compatible, meaning that the derivative of the fundamental tensor is an expression in terms of the Cartan tensor (see subsection \ref{chernsection}). Both properties allow one to 
 compute  the first and second variation of the energy functional in  a coordinate-free  manner.  In this process, we will also use the further developments given in \cite{J13,J14}, where a satisfactory relation between the curvature of the affine connection and the Chern curvature is obtained. 

As one of our intentions is to promote the study of Finsler geometry between researchers of Riemannian background, we have 
%made a special effort to make the paper self-contained with 
 included many details, 
%and in some cases repeating computations already known in literature, 
with the purpose of providing in some cases index-free proofs or establishing the results in the very general setting of pseudo-Finsler metrics, apparently, the most general case where the Chern connection can be defined 
 (see Remark \ref{quitevague}). In particular the square of a Finsler metric is a pseudo-Finsler metric and the notions of indefinite Finsler metrics \cite{Beem70,BeKi73}
and Finsler spacetimes \cite{Perlick06} can fit into this definition.

The fundamental objective of this paper is to provide a computation of the first and the second variation of the energy functional of a pseudo-Finsler metric (Propositions \ref{firstvar} and \ref{secondvar}). As a first step, we prove that geodesics are the critical points of the energy functional when we consider curves with fixed endpoints, or, more generally, with endpoints in two submanifolds $P$ and $Q$ (Corollary \ref{critgeo}). Moreover, the second variation formula allows us to define the index form and the Jacobi fields (subsection \ref{critgeo}), and with our approach to the Chern connection we can straightforwardly deduce  some basic properties of Jacobi fields (see subsection \ref{jacobea}) and characterize the
kernel of the index form as the  $(P,Q)$-Jacobi fields along $\gamma$ (Proposition \ref{kernelindex}). 
%% Finally we use the characterization of geodesics as critical points of the energy functional and $(P,Q)$-Jacobi fields as the kernel of the index form to prove that lightlike geodesics and its $P$-focal points of a pseudo-Finsler metric are preserved by conformal transformations (see Proposition \ref{georepara} and Theorem \ref{finalresult}) generalizing a classical semi-Riemannian result (see \cite[Section 2.6]{MinSan08}).

The paper is structured as follows. Section \ref{quadratic} contains some basic results about pseudo-Finsler metrics including some properties of its fundamental tensor, the Cartan tensor and the Chern connection.
We also introduce several basic notions:  parallelism of a vector field along a curve, geodesics, namely, curves having parallel tangent vector fields, and the exponential map. In the last subsection we recall some properties 
 concerning  the curvature of the Chern connection obtained in \cite{J13,J14},  in particular its relation to the Chern curvature.

In Section \ref{variations} we compute the first and the second variation of the energy functional (Propositions \ref{firstvar} and \ref{secondvar}) and then we get the index form when the boundary conditions are given by two submanifolds. As a preparatory  step, in subsection \ref{submani} we  collect some definitions and properties of submanifolds of pseudo-Finsler manifolds. Then,  in subsection \ref{jacobea}, we  give some properties of Jacobi fields.

%% Finally, in Section \ref{section:light} we study the effects of a conformal transformation in lightlike geodesics. We first obtain the first and second variation of the energy functional of the metric $\lambda L$ in terms of the Chern connection of the metric $L$ (see Proposition \ref{firstvar2} and \ref{secondvar2}). Then we prove that lightlike geodesics and its $P$-focal points are preserved by conformal changes (see Proposition \ref{georepara} and Theorem \ref{finalresult}).
\section{Pseudo-Finsler metrics and the Chern connection}\label{quadratic}
\subsection{Preliminaries on pseudo-Finsler metrics} %% Let us introduce the most general notion of Finsler metric that admits a Chern connection.
Let $M$ be an  $n$-dimensional manifold and denote by $\pi:TM\rightarrow M$ the natural projection
of the tangent bundle $TM$ into $M$. Let $A\subset TM\setminus  {\bf 0}$ be an open subset of $TM$ which is conic, that is, such that $\pi(A)=M$ and $\lambda v\in A$, for every $v\in A$ and $\lambda>0$. We say that a smooth function
$L:A\rightarrow \R$ is a (conic, two homogeneous) {\it pseudo-Finsler metric} if
\begin{enumerate}[(i)]
\item $L$ is positive-homogeneous of degree $2$, that is, $L(\lambda v)=\lambda^2 L(v)$ for every $v\in A$ and $\lambda>0$,
\item for every $v\in A$, {\it the fundamental tensor $g_v$ of $L$ at 
$v$} defined by
\[g_v(u,w):=\frac 12 \frac{\partial^2}{\partial t\partial s} L(v+tu+sw)|_{t=s=0},\]
%the matrix 
%\[g_{ij}(x,v)=\left[\frac 12 \frac{\partial^2 F}{\partial v^i\partial v^j}(x,v)\right]\]
for any $u,w\in T_{\pi(v)}M$, is nondegenerate.
\end{enumerate}
 Clearly,  the fundamental tensor is bilinear and symmetric. We will  refer to the pair $(M,L)$, being $M$ a manifold and $L$ a pseudo-Finsler metric on $M$, as a pseudo-Finsler manifold. 
\begin{rem}\label{pseudo} 
In the following we shall omit the adjectives   two-homogeneous and conic  whenever there is no danger of misunderstanding. In \cite{JS13} the same name of pseudo-Finsler metrics is used for a somewhat different concept. In the cited reference,  a pseudo-Finsler metric is not allowed to be non-positive away from the zero section and it is  positive homogeneous of degree one. Moreover, its fundamental tensor is not necessarily nondegenerate. Nevertheless, if $F:A_1\subset TM\rightarrow [0,+\infty)$ is a conic pseudo-Finsler metric on $M$ as in \cite{JS13}  and 
\[\tilde{A}_1=\{v\in A_1\setminus {\bf 0}: \text{the fundamental tensor $g_v$ of $F^2$ is nondegenerate}\},\]
then $L=F^2|_{\tilde{A}_1}:\tilde{A}_1\rightarrow (0,+\infty)$ fits in our definition of pseudo-Finsler metric. Moreover, if $L:A_2\subset TM\setminus {\bf 0}\rightarrow\R$ is a pseudo-Finsler metric on $M$ as defined above and
$\tilde{A}_2=\{v\in A_2: L(v)\not=0\},$ then $F=\left.\sqrt{|L|}\right|_{\tilde{A}_2}:\tilde{A}_2\rightarrow (0,+\infty)$ is a conic pseudo-Finsler metric on $M$ as in \cite{JS13} (extending $F$  continuously to the zero section if necessary). Note finally that the  concept  of pseudo-Finsler metrics in \cite{JS13} is particularly convenient when one is interested in studying distance properties. %% whereas the definition above is the most general context where Chern connection can be defined.
\end{rem}
%% \begin{rem}
%% We have assumed that the conic subset $A$ does not intersect the zero section. As $A$ is required to be open, the only case in that this assumption can be a limitation is when $A=TM\setminus {\bf 0}$. But in such a case it is easy to see that $L$ can be extended continuously to the zero section and the definition coincides with the classical definition of Finsler metrics.
%% \end{rem}
 We mention some particular cases of pseudo-Finsler metrics:
\begin{enumerate}[(i)]
\item if $A=TM\setminus {\bf 0}$ and the fundamental tensor is positive definite, then $L$ is positive away from the zero section and $F=\sqrt{L}$ is what traditionally has been called a Finsler metric;
\item if $A\subsetneq TM\setminus {\bf 0}$, but the fundamental tensor is positive definite, then the square root
$F=\sqrt{L}$ is called a {\it conic Finsler} metric in \cite{JS13};
\item if the fundamental tensor has index one, then $L$ is called a 
{\it Lorentzian Finsler  metric} (see \cite{AJ14,CJS13,JV13}). This is also the case 
of Finsler spacetimes where some authors ask $L$ to be defined in the whole 
$TM$ \cite{Beem70,BeKi73,Perlick06}.
\end{enumerate}

\begin{rem}
Even though sometimes the domain of definition can change (see Remark \ref{pseudo}), from now on, by abuse of notation, we omit the subset $A$ when fixing a 
(conic) pseudo-Finsler manifold $(M,L)$, assuming that $L$ is defined in $A$. 
\end{rem}
  We end this subsection with a proposition that describes the 
positive homogeneity of the fundamental tensor of a pseudo-Finsler metric 
as well as some relations between the fundamental tensor and the metric. 
It is a simple consequence of basic properties of homogeneous functions. 
\begin{prop}\label{fundamentalprop}
Given a pseudo-Finsler metric $L$ and $v\in A$, the fundamental tensor 
$g_v$ is positive homogeneous of degree 0, that is, $g_{\lambda v}=g_v$ for $\lambda>0$. Moreover $g_v(v,v)=L(v)$ and
$ g_v(v,w)= \frac 12\frac{\partial}{\partial z}L\left(v+z w\right)|_{z=0}$.
\end{prop}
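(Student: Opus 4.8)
The plan is to reduce all three assertions to Euler's identity for positively homogeneous functions. Throughout, fix $x=\pi(v)$ and set $\omega_v(w):=\frac{\partial}{\partial z}L(v+zw)|_{z=0}$ for $v\in A$, $w\in T_xM$; since $L$ is smooth on the open conic set $A$, the map $w\mapsto\omega_v(w)$ is just the fibrewise differential $dL_v$ of $L$ at $v$, hence linear in $w$. First I would record two facts. Differentiating $L(\lambda v)=\lambda^2L(v)$ with respect to $v$ in a direction $w$ gives $\omega_{\lambda v}(w)=\lambda\,\omega_v(w)$ for every $\lambda>0$, i.e. $\omega$ is positive homogeneous of degree $1$ in its base point. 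And directly from the definition of the fundamental tensor, $g_v(u,w)=\tfrac12\frac{\partial}{\partial s}\omega_{v+sw}(u)|_{s=0}$, which is legitimate because $v+sw\in A$ for $|s|$ small, $A$ being open.

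For the $0$-homogeneity of $g$ I would substitute $s=\lambda t$ in this last formula and use the degree-$1$ homogeneity of $\omega$: $g_{\lambda v}(u,w)=\tfrac{1}{2\lambda}\frac{\partial}{\partial t}\omega_{\lambda(v+tw)}(u)|_{t=0}=\tfrac12\frac{\partial}{\partial t}\omega_{v+tw}(u)|_{t=0}=g_v(u,w)$, so that $g_{\lambda v}=g_v$ (note $\lambda(v+tw)\in A$ since $A$ is conic).

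For $g_v(v,v)=L(v)$, Euler's identity for the degree-$2$ function $L$ gives $\omega_v(v)=\frac{\partial}{\partial z}(1+z)^2L(v)|_{z=0}=2L(v)$; then, using the degree-$1$ homogeneity of $\omega$, $g_v(v,v)=\tfrac12\frac{\partial}{\partial s}\omega_{v+sv}(v)|_{s=0}=\tfrac12\frac{\partial}{\partial s}\big((1+s)\,\omega_v(v)\big)|_{s=0}=\tfrac12\omega_v(v)=L(v)$. For the last identity I would differentiate the relation $\omega_u(u)=2L(u)$, valid for all $u\in A$ by the previous line, along the curve $u=v+sw$ at $s=0$. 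The chain rule splits the left-hand side into the base-point term $\frac{\partial}{\partial s}\omega_{v+sw}(v)|_{s=0}=2g_v(w,v)$ and the argument term $\omega_v(w)$ (here using linearity of $\omega_v$ in its argument), while the right-hand side gives $2\,\omega_v(w)$; hence $2g_v(w,v)+\omega_v(w)=2\omega_v(w)$, and by the symmetry of $g_v$ already noted this is exactly $g_v(v,w)=\tfrac12\omega_v(w)=\tfrac12\frac{\partial}{\partial z}L(v+zw)|_{z=0}$.

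I do not expect a genuine obstacle: the entire argument is an exercise in Euler's theorem. The only points needing a little care are formal, namely checking that $v+sw$ and $\lambda(v+tw)$ stay in $A$ (immediate, $A$ open and conic) and keeping track of which slot of $\omega$ each partial derivative acts on, the interchange of the order of differentiation being automatic from the smoothness of $L$. One could equivalently run the same computation in natural coordinates $(x^i,y^i)$ on $TM$, where the statement is the textbook chain $y^i\partial_{y^i}L=2L$, $g_{ij}(v)v^iv^j=L(v)$, $g_{ij}(v)v^i=\tfrac12\partial_{y^j}L(v)$; the coordinate-free version above is just a repackaging of these, in keeping with the spirit of the paper.
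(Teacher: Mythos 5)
Your argument is correct and is precisely the one the paper has in mind: the proposition is stated there without proof, only with the remark that it is ``a simple consequence of basic properties of homogeneous functions,'' and your write-up fills in exactly that Euler-identity computation (degree-$1$ homogeneity of the fibrewise differential giving the $0$-homogeneity of $g$, Euler's theorem giving $\omega_v(v)=2L(v)$, and one further differentiation giving $g_v(v,w)=\tfrac12\omega_v(w)$). No gaps; the use of the symmetry of $g_v$ in the last step is legitimate since the paper records it immediately after the definition.
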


%% \begin{proof}
%% Let us begin by showing the positive homogeneity of degree zero:
%% \begin{align*}
%%  g_{\lambda v}(u,w)&=\frac 12 \frac{\partial^2}{\partial t\partial s} L(\lambda v+tu+sw)|_{t=s=0}=
%% \frac {\lambda^2}{2} \frac{\partial^2}{\partial t\partial s} L(v+t\frac{1}{\lambda}u+s\frac{1}{\lambda}w)|_{t=s=0}\\
%% &=\lambda^2 g_v(\frac{1}{\lambda}u,\frac{1}{\lambda}w)=g_v(u,w)
%% \end{align*}
%% for every $u,w\in T_{\pi(v)}M$. Moreover, 
%%  \begin{equation*}
%%   g_v(v,v)=\frac 12\frac{\partial^2}{\partial s\partial t}L(v+sv+tv)|_{t=s=0}=\
%% \frac 12 \frac{\partial^2}{\partial s\partial t}(1+s+t)^2|_{t=s=0} L(v)=L(v)
%%  \end{equation*}
%% and 
%% \begin{align*}
%%   g_v(v,w)&=\frac 12 \frac{\partial^2}{\partial t\partial s} L( v+sv+tw)|_{t=s=0}\\
%% &=\frac 12\left. \frac{\partial^2}{\partial t\partial s}\left[(1+s)^2 L\left(v+\frac{t}{1+s}w\right)\right]\right|_{t=s=0}\\
%% &=\frac 12\left. \frac{\partial}{\partial s}\left[(1+s)^2 \frac{\partial}{\partial t}L\left(v+\frac{t}{1+s}w\right)\right]\right|_{t=s=0}\\
%% &=\frac 12\left. \frac{\partial}{\partial s}\left[(1+s) \frac{\partial}{\partial z}L\left(v+z w\right)\right]\right|_{z=s=0}
%% =\frac 12\frac{\partial}{\partial z}L\left(v+z w\right)|_{z=0},
%% \end{align*}
%% for any $w\in T_{\pi(v)}M$, where $z=t/(1+s)$ and $\partial z/\partial s=-z/(1+s)$.
%% \end{proof}

%As in Pseudo-Riemannian geometry, we can define geodesics as critical points of the energy functional. 
\subsection{Cartan tensor} In Finsler geometry, unlike the Riemannian setting, we need to consider the third derivatives of the metric in order to define a connection. This information is contained in the {\it Cartan tensor}, which is defined as the trilinear form
\begin{equation}\label{cartantensor}
C_v(w_1,w_2,w_3)=\frac 14\left. \frac{\partial^3}{\partial s_3\partial s_2\partial s_1}L\left(v+\sum_{i=1}^3 s_iw_i\right)\right|_{s_1=s_2=s_3=0},
\end{equation}
for $v\in A$ and $w_1,w_2,w_3\in T_{\pi(v)} M$.
Observe that $C_v$ is symmetric, that is, its value does not depend on the order of $w_1$, $w_2$ and $w_3$. 
\begin{rem}\label{fiberbundle}
Let $\pi_A:A\rightarrow M$ be the restriction to $A$ of the natural 
projection $\pi:TM\rightarrow M$. Now let $\pi^*_A(T^*M)$ be the fiber bundle over 
$A$ induced by the natural projection of the cotangent bundle $\pi^*:T^*M
\rightarrow M$ through $\pi_A$.
Observe that the fundamental tensor is a symmetric section of the fiber bundle $\pi^*_A(T^*M)\otimes
\pi^*_A(T^*M)$. 
Moreover, the Cartan tensor is a symmetric section of the fiber bundle $\pi^*_A(T^*M)\otimes\pi^*_A(T^*M)\otimes \pi^*_A(T^*M)$.
\end{rem}
 Furthermore, the Cartan tensor can be obtained from the fundamental tensor as
\[ C_v(w_1,w_2,w_3)=\frac 12\left.\frac{\partial}{\partial z} g_{v+z w_1}(w_2,w_3)\right|_{z=0}.\]
If $g$ is an arbitrary symmetric section of $\pi^*_A(T^*M)\otimes
\pi^*_A(T^*M)$ that is positive homogeneous of degree zero  ($g_v=g_{\lambda v}$ for $\lambda>0$)  we define its Cartan tensor as above.

 The following is another simple consequence of basic properties of 
homogeneous functions: 
\begin{prop}\label{Cartanprop}
 The Cartan tensor is homogeneous of degree $-1$, that is, $C_{\lambda v}=\frac{1}{\lambda}C_v$ for any $v\in A$ and $\lambda>0$. Moreover,  $C_v(v,w_1,w_2)=0$ for every $w_1,w_2\in T_{\pi(v)}M$.
\end{prop}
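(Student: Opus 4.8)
The plan is to deduce both statements from Euler's identity for positively homogeneous functions, applied to the homogeneity of the fundamental tensor already established in Proposition \ref{fundamentalprop}, together with the formula $C_v(w_1,w_2,w_3)=\tfrac12\,\frac{\partial}{\partial z} g_{v+zw_1}(w_2,w_3)|_{z=0}$ recorded just before the statement. Heuristically, since $L$ is positive homogeneous of degree $2$, its first $v$-derivative is homogeneous of degree $1$, its second of degree $0$ (this is the homogeneity of $g_v$), and its third of degree $-1$; the point is to make this precise at the level of $C_v$.

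For the homogeneity of degree $-1$, I would fix $\lambda>0$, $v\in A$ and $w_1,w_2,w_3\in T_{\pi(v)}M$, and perform the change of variable $z=\lambda t$ in the defining expression:
\[C_{\lambda v}(w_1,w_2,w_3)=\tfrac12\,\frac{\partial}{\partial z} g_{\lambda v+z w_1}(w_2,w_3)\Big|_{z=0}=\tfrac{1}{2\lambda}\,\frac{\partial}{\partial t} g_{\lambda(v+t w_1)}(w_2,w_3)\Big|_{t=0}.\]
Since $g$ is positive homogeneous of degree $0$ by Proposition \ref{fundamentalprop}, we have $g_{\lambda(v+tw_1)}=g_{v+tw_1}$ for $|t|$ small enough that $v+tw_1\in A$ (possible because $A$ is open), and the right-hand side becomes $\tfrac1\lambda\,C_v(w_1,w_2,w_3)$.

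For the identity $C_v(v,w_1,w_2)=0$, I would differentiate the relation $g_{\lambda v}=g_v$ with respect to $\lambda$ at $\lambda=1$. Writing $\lambda=1+z$ and applying the formula for $C_v$ with first argument $v$, the derivative of the left-hand side is $\frac{\partial}{\partial z} g_{v+zv}(w_1,w_2)|_{z=0}=2\,C_v(v,w_1,w_2)$, while the right-hand side is constant in $z$; hence $C_v(v,w_1,w_2)=0$, and by the symmetry of $C_v$ the same holds with $v$ placed in any of the three slots. The computation is entirely routine; the only point requiring a bit of care is the bookkeeping of the chain rule and change of variable when passing the factor $\lambda$ through the $z$-derivative, and checking that all evaluation points remain in the conic domain $A$. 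Finally, this argument uses only that $g$ is a symmetric section positive homogeneous of degree $0$, so it also proves the degree $-1$ homogeneity and the vanishing $C_v(v,\cdot,\cdot)=0$ for the Cartan tensor of an arbitrary such $g$.
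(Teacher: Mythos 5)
Your proof is correct and is exactly the routine homogeneity argument the paper alludes to (the paper omits the proof, stating only that the proposition is ``another simple consequence of basic properties of homogeneous functions''). The change of variable $z=\lambda t$ for the degree $-1$ statement and the differentiation of $g_{\lambda v}=g_v$ at $\lambda=1$ for $C_v(v,\cdot,\cdot)=0$ are the intended steps, and your observation that only the degree-$0$ homogeneity of $g$ is used matches the paper's remark that the Cartan tensor is defined the same way for an arbitrary such section $g$.
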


%% \begin{proof}
%% Recall that $g_v$ is homogeneous of degree zero in $v$ (see Proposition \ref{fundamentalprop}). Then, for the homogeneity of $C_v$:
%% \begin{align*}
%%  C_{\lambda v}(w_1,w_2,w_3)&=\frac 12\left.\frac{\partial}{\partial z} g_{\lambda v+z w_1}(w_2,w_3)\right|_{z=0}=
%%  \frac 12\left.\frac{\partial}{\partial z} g_{v+z \frac{w_1}{\lambda}}(w_2,w_3)\right|_{z=0}\\&= C_v(w_1/\lambda,w_2,w_3)=
%% \frac{1}{\lambda} C_v(w_1,w_2,w_3),
%% \end{align*}
%% for any $w_1,w_2,w_3\in T_{\pi(v)}M$.
%% For the second property, 
%% \begin{equation*}
%%  C_v(v,w_1,w_2)
%% =\left.\frac{\partial}{\partial z} g_{(1+z)v}(w_1,w_2)\right|_{z=0}=
%% \left.\frac{\partial}{\partial z} g_{v}(w_1,w_2)\right|_{z=0}=0.
%% \end{equation*}
%% \end{proof}

 To conclude this subsection, we state a 
%The  in Proposition \ref{generalizedmetrics} is 
well-known result that appears, for example, in \cite[Theorem 3.4.2.1]{AIM93}.

\begin{prop}\label{generalizedmetrics}
An arbitrary (non-degenerate) symmetric section $g$ of \linebreak
$\pi^*_A(T^*M)\otimes \pi^*_A(T^*M)$ that is positive homogeneous of degree zero comes from a pseudo-Finsler metric
if and only if its Cartan tensor is  symmetric.
\end{prop}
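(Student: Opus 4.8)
The plan is to prove the two implications separately; the forward direction is immediate, and the converse rests on a single differentiation. Suppose first that $g$ is the fundamental tensor of a pseudo-Finsler metric $L$. Then, combining the defining formula $g_v(u,w)=\frac12\frac{\partial^2}{\partial t\partial s}L(v+tu+sw)|_{t=s=0}$ with \eqref{cartantensor}, the Cartan tensor of $g$ is the third mixed partial derivative of $L$ at $v$ in the directions $w_1,w_2,w_3$, and is therefore symmetric because partial derivatives commute. This is exactly the observation already recorded right after \eqref{cartantensor}, so nothing new is required here.

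For the converse, assume $g$ is positive homogeneous of degree zero and has symmetric Cartan tensor. The first point I would isolate is that homogeneity of degree zero \emph{alone} already forces $C_v(v,w_1,w_2)=0$: indeed $g_{v+zv}=g_{(1+z)v}=g_v$ is independent of $z$, so $\frac12\frac{\partial}{\partial z}g_{v+zv}(w_1,w_2)\big|_{z=0}=0$. Feeding this into the assumed full symmetry of $C$ yields also $C_v(w_1,v,w_2)=C_v(w_1,w_2,v)=0$, i.e. the vector $v$ may be placed in \emph{any} one of the three slots of $C_v$; this is the only place the symmetry hypothesis is used. By Proposition~\ref{fundamentalprop} the only possible candidate metric is $L(v):=g_v(v,v)$, which is smooth on $A$ because $v\mapsto g_v$ is a smooth section, and which satisfies condition (i) since $L(\lambda v)=g_{\lambda v}(\lambda v,\lambda v)=\lambda^2 g_v(v,v)=\lambda^2 L(v)$ by the degree-zero homogeneity of $g$.

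It then remains only to verify that the fundamental tensor of this $L$ at $v$ equals $g_v$; non-degeneracy of $g$ then yields condition (ii) and finishes the proof. For this I would differentiate $h(t,s):=L(v+tu+sw)=g_\xi(\xi,\xi)$, where $\xi=v+tu+sw$, using the Leibniz rule together with the identity $\frac{\partial}{\partial r}g_{\eta+ry}(\cdot,\cdot)\big|_{r=0}=2\,C_\eta(y,\cdot,\cdot)$. Differentiating in $s$ gives $\frac{\partial h}{\partial s}=2C_\xi(w,\xi,\xi)+2g_\xi(w,\xi)=2g_\xi(w,\xi)$, the Cartan term vanishing by the slot observation above; differentiating again in $t$ and evaluating at $t=s=0$ gives $\frac{\partial^2 h}{\partial t\partial s}\big|_0=4C_v(u,w,v)+2g_v(u,w)=2g_v(u,w)$, again using $C_v(u,w,v)=0$. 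Hence $\frac12\frac{\partial^2 h}{\partial t\partial s}\big|_0=g_v(u,w)$, as required. The computation itself is routine once the diagonal slot-symmetry of $C$ is available; the one step I would take care with is correctly identifying the terms of the form $C_\xi(\cdot,\cdot,\xi)$ and $C_\xi(\cdot,\xi,\cdot)$ before discarding them, since without the symmetry hypothesis precisely these terms would survive and $g$ would fail to be the fundamental tensor of $v\mapsto g_v(v,v)$.
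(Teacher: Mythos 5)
Your proof is correct. Note that the paper does not actually prove this proposition at all --- it only cites \cite[Theorem 3.4.2.1]{AIM93} --- so your argument is a genuine addition rather than a variant of an existing one. The forward implication is, as you say, just commutativity of partial derivatives. In the converse, the two key points are exactly the ones you isolate: (a) degree-zero homogeneity alone gives $C_v(v,\cdot,\cdot)=0$ (vanishing in the \emph{first}, differentiation, slot), and the symmetry hypothesis is what lets you transport $v$ into the other slots; (b) the candidate $L(v)=g_v(v,v)$ is forced by Proposition~\ref{fundamentalprop}, is smooth and $2$-homogeneous, and the chain-rule computation $\partial_s L(\xi)=2C_\xi(w,\xi,\xi)+2g_\xi(w,\xi)$, $\partial_t\partial_s L(\xi)|_0=4C_v(u,v,w)+2g_v(u,w)$ reduces to $2g_v(u,w)$ precisely because of (a). Non-degeneracy of $g_v$ then gives condition (ii) of the definition. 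The only caveat worth recording is that "symmetric" need only mean symmetry involving the first slot (symmetry in the last two slots is automatic from the symmetry of $g$), but that is a refinement, not a gap; your proof as written is complete.
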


\begin{rem}\label{quitevague}
 An arbitrary symmetric section $g$ of $\pi^*_A(T^*M)\otimes
\pi^*_A(T^*M)$ that is positive homogeneous of degree zero is usually known as a generalized metric. It was introduced by A. Moor in 1956 \cite{Moor56} and studied in detail by J. R. Vanstone \cite{Vanstone}, M. Hashiguchi \cite{Hashiguchi}, R. Miron \cite{Miron83} and others. For a quite recent survey, we refer to \cite{LPS}. 
%J. Kern  \cite{Kern74}. 
Unfortunately, the Chern connection is not well-defined for generalized metrics unless they come from a pseudo-Finsler metric. This is because the following remark is essential  to prove existence of a connection which is torsion-free and almost metric compatible (see for example \cite[Proposition 2.3]{J13}).
\end{rem}
\begin{rem}\label{porvenir}
In the case of a pseudo-Finsler manifold $(M,L)$, the Cartan tensor is symmetric. This together with Proposition \ref{Cartanprop} means that $C_v(v,w_1,w_2)=C_v(w_1,v,w_2)=C_v(w_1,w_2,v)=0$ for any $v\in A$ and $w_1,w_2\in T_{\pi(v)}M$.
\end{rem}
\subsection{Chern connection and covariant derivative}\label{chernsection}

 Assume that $(M,L)$ is a pseudo-Finsler manifold with domain $A\subset TM$, and denote by $\mathfrak X(\Omega)$ the module of smooth vector fields on an open subset $\Omega\subset M$. We say that $V\in\mathfrak X(\Omega)$ is {\it $L$-admissible} if $V(p)\in A$ for every $p\in\Omega$. Then the mapping
\[
 \begin{cases}
  g_V: (X,Y)\in\mathfrak X(\Omega)\times \mathfrak X(\Omega)\mapsto g_V(X,Y)\in C^\infty(\Omega),\\
  g_V(X,Y)(p):= g_{V(p)}(X(p),Y(p))\quad (p\in\Omega)
 \end{cases}
\]
is a pseudo-Riemannian metric on $\Omega$. We construct in the same way a type $(0,3)$ tensor field $C_V$ on $\Omega$ from the Cartan tensor.

 It can be associated to any $L$-admissible vector field $V\in\mathfrak X(\Omega)$ an affine connection
\[
 \nabla^V:\mathfrak X(\Omega)\times \mathfrak X(\Omega)\to \mathfrak X(\Omega)
\]
such that
\begin{itemize}
 \item[(i)] $\nabla^V_XY-\nabla^V_YX=[X,Y]$ for all $X,Y\in\mathfrak X(\Omega)$ 
({\it torsion freeness});
 \item[(ii)] $X(g_V(Y,Z))=g_V(\nabla^V_XY,Z)+g_V(Y,\nabla^V_XZ)+2C_V(\nabla^V_XV,Y,Z)$ for all $X,Y,Z\in \mathfrak X(\Omega)$ ({\it almost $g$-compatibility}).
\end{itemize}
We say that $\nabla^V$ is the {\it Chern connection} of  $(M,L)$  associated to the $L$-admissible vector field $V\in\mathfrak X(\Omega)$. It is easy to see that $\nabla^V$ is positive homogeneous of degree $0$ in $V$, i.e., $\nabla^V=\nabla^{\lambda V}$ for all positive $\lambda$. 

 Now we suppose that $\Omega$ is a chart domain with coordinate system 
\[
 x=(x^1,\dots,x^n):\Omega\to x(\Omega)\subset \R^n.
\]
The Christoffel symbols of $\nabla^V$ with respect to the chart $(\Omega,x)$ are the smooth functions $\Gamma^k_{ij}(V):\Omega\to\R$ such that
\[
 \nabla^V_{\frac\partial{\partial x^i}}\left(\frac\partial{\partial x^j}\right)=\sum\Gamma^k_{ij}(V)\frac\partial{\partial x^k};\quad i,j\in\{1,\dots,n\}.
\]
In fact, $\Gamma^k_{ij}(V)=\Gamma^k_{ij}\circ V$ where $\Gamma^k_{ij}:\pi^{-1}(\Omega)\cap A\to\R$ are the Christoffel symbols of the Chern connection (see \cite[Proposition 2.6]{J13}).

 Given a smooth curve $\gamma:[a,b]\to M$, we denote by $\mathfrak X(\gamma)$ the $C^\infty([a,b])$-module of vector fields along $\gamma$. We say that $W\in\mathfrak X(\gamma)$ is $L$-admissible if $W(t)\in A$ for all $t\in [a,b]$. For every $L$-admissible vector field $W\in\mathfrak X(\gamma)$, the Chern connection induces a covariant derivative $D^W_\gamma:\mathfrak X(\gamma)\to\mathfrak X(\gamma)$ along $\gamma$, given locally,  when $\gamma$ is contained in the chart domain $\Omega$,  by
\begin{equation}\label{connection}
 D^W_\gamma X=\sum_{i=1}^n\left(\dot X^k+\sum_{i,j=1}^nX^i \dot\gamma^j (\Gamma^k_{ij}\circ W)\right)\left(\frac\partial{\partial x^k}\circ\gamma\right),
\end{equation}
where $X=\sum_{i=1}^n X^i(\frac\partial{\partial x^i}\circ\gamma)$, $\dot\gamma=\sum_{i=1}^n \dot\gamma^i(\frac\partial{\partial x^i}\circ\gamma)$ (see, again, \cite[Proposition 2.6]{J13}). The induced covariant derivative is also almost $g$-compatible in the sense that
\begin{equation}\label{almostmetric}
 (g_W(X,Y))'=g_W(D^W_\gamma X,Y)+g_W(X,D^W_\gamma Y)+2C_W(D^W_\gamma W,X,Y),
\end{equation}
for all $X,Y\in\mathfrak X(\gamma)$.

\subsection{Parallel vector fields}\label{parallelism}

 Having been defined the induced covariant derivative, we can introduce the concept of parallelism along a (piecewise) smooth curve $\gamma:[a,b]\to M$. Then, of course, we have to assume that $\gamma$ is also $L$-admissible in the sense that $\dot\gamma(t)\in A$ for all $t\in[a,b]$. (At break points, both velocity vectors of $\gamma$ must belong to $A$.) For simplicity, we may assume that $\operatorname{Im}(\gamma)$ is in the domain of a chart $(\Omega,x)$; this assumption is clearly not restrictive.

\begin{defi}
 Let $(M,L)$ be a pseudo-Finsler manifold and $\gamma$ a smooth $L$-admissible curve in $M$. A vector field $X\in\mathfrak X(\gamma)$ is called {\it parallel} if $D^{\dot\gamma}_\gamma X=0$.
\end{defi}
\begin{prop}
 Hypotheses and notation as above. Given a vector $w\in  T_{\dot\gamma(a)}M$, there is a unique parallel vector field $X$ along $\gamma$ such that $X(a)= w$.
\end{prop}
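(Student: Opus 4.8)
The plan is to reduce the statement to the classical existence-and-uniqueness theorem for linear systems of ordinary differential equations. By the standing assumption, $\operatorname{Im}(\gamma)$ lies in the domain of a single chart $(\Omega,x)$, so I would write $X\in\mathfrak X(\gamma)$ in coordinates as $X=\sum_{k=1}^n X^k\bigl(\tfrac{\partial}{\partial x^k}\circ\gamma\bigr)$ and apply formula~\eqref{connection} with $W=\dot\gamma$. The condition $D^{\dot\gamma}_\gamma X=0$ then reads
\[
 \dot X^k(t)=-\sum_{i,j=1}^n X^i(t)\,\dot\gamma^j(t)\,\bigl(\Gamma^k_{ij}\circ\dot\gamma\bigr)(t),\qquad k=1,\dots,n,
\]
which is a first-order system that is \emph{linear} in the unknowns $(X^1,\dots,X^n)$, since the Christoffel symbols are evaluated along the fixed curve $\dot\gamma$ and hence do not involve $X$ itself.

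Next I would check that the coefficients are well-behaved: because $\gamma$ is smooth and $L$-admissible, $\dot\gamma(t)\in A$ for every $t\in[a,b]$, and the functions $\Gamma^k_{ij}$ are smooth on $\pi^{-1}(\Omega)\cap A$; thus $t\mapsto\dot\gamma^j(t)$ and $t\mapsto(\Gamma^k_{ij}\circ\dot\gamma)(t)$ are smooth on the compact interval $[a,b]$. Writing $(w^1,\dots,w^n)$ for the components of $w$ in the basis $\bigl(\tfrac{\partial}{\partial x^k}\bigr)_{\gamma(a)}$, the fundamental theorem for linear ODEs then yields a unique solution $(X^1,\dots,X^n)$ defined on all of $[a,b]$ with $X^k(a)=w^k$ --- it is linearity that prevents the solution from escaping in finite time and guarantees that it is global on $[a,b]$. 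Smoothness of the right-hand side gives that each $X^k$ is smooth, so $X=\sum_k X^k\bigl(\tfrac{\partial}{\partial x^k}\circ\gamma\bigr)$ is the desired parallel vector field, and its uniqueness is immediate from uniqueness for the ODE system.

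There is no real obstacle here; the one point worth emphasizing is that the dependence of the Chern connection on its reference vector field has been \emph{frozen} to $\dot\gamma$, which is exactly what turns the parallel-transport equation into a linear (rather than merely quasilinear) system and thereby forces the solutions to be defined on the whole of $[a,b]$. If one later needs the piecewise-smooth version --- or wants to drop the single-chart assumption --- one simply covers $[a,b]$ by finitely many subintervals whose images lie in chart domains and patches the local solutions, matching values at the junction points; uniqueness on each piece then propagates to uniqueness of the patched field.
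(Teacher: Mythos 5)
Your proof is correct and is precisely the ``routine'' argument the paper declines to spell out: freezing the reference vector to $\dot\gamma$ in formula \eqref{connection} turns the parallelism condition into a linear first-order ODE system with smooth coefficients on $[a,b]$, so existence, uniqueness and global definition follow from the standard theory. The remarks on linearity forcing globality and on patching across charts or break points are exactly the right points to flag.
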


The proof is routine.

\subsection{Geodesics and the exponential map} 
\begin{defi}
 A smooth $L$-admissible curve $\gamma$ of a pseudo-Finsler manifold $(M,L)$ is called a {\it geodesic} if its velocity vector field $\dot\gamma$ is parallel along $\gamma$. 
\end{defi} 

In terms of local coordinates, the geodesic equation is of the form
\begin{equation}
 \ddot\gamma^k+\sum_{i,j=1}^n\dot\gamma^i \dot\gamma^j(\Gamma^k_{ij}\circ\dot\gamma)=0,\quad k\in\{1,\dots,n\}.
\end{equation}
As in the pseudo-Riemannian case, we have:
\begin{prop}
 Let $(M,L)$ be a pseudo-Finsler manifold. For every $v\in A$, there exists a unique (maximal) geodesic $\gamma_v:\left[a,b\right)\to M$ such that $\dot\gamma_v(0)=v$, $b\in\left(0,+\infty\right]$.
\end{prop}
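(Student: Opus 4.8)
The plan is to reduce the statement to the standard existence–uniqueness theorem for integral curves of a smooth vector field, applied to the geodesic spray on $A$. First I would work in a chart $(\Omega,x)$, with induced coordinates $(x^1,\dots,x^n,y^1,\dots,y^n)$ on $\pi^{-1}(\Omega)\subset TM$, and define on the open set $\pi^{-1}(\Omega)\cap A$ the vector field
\[
 G=\sum_{k=1}^n y^k\frac\partial{\partial x^k}-\sum_{k=1}^n\left(\sum_{i,j=1}^n y^iy^j\,\Gamma^k_{ij}\right)\frac\partial{\partial y^k},
\]
where $\Gamma^k_{ij}:\pi^{-1}(\Omega)\cap A\to\R$ are the Christoffel symbols of the Chern connection. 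Since these functions are smooth, $G$ is a smooth vector field on $\pi^{-1}(\Omega)\cap A$; moreover, one checks that $G$ does not depend on the chosen chart (equivalently, being a geodesic is a chart-independent notion), so the local pieces patch to a globally defined smooth vector field on $A$, the geodesic spray. By the local geodesic equation displayed above, a smooth curve $t\mapsto\gamma(t)$ with $\dot\gamma(t)\in A$ for all $t$ is a geodesic if and only if $t\mapsto\dot\gamma(t)$ is an integral curve of $G$.

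Next I would invoke the fundamental theorem on flows: for each $v\in A$ there is $\varepsilon>0$ and a unique integral curve $\beta_v:(-\varepsilon,\varepsilon)\to A$ of $G$ with $\beta_v(0)=v$, depending smoothly on $v$ (the latter being useful later for the exponential map). Setting $\gamma_v:=\pi\circ\beta_v$ yields a geodesic with $\dot\gamma_v=\beta_v$, in particular $\dot\gamma_v(0)=v$ and $\dot\gamma_v(t)\in A$ throughout; note that $L$-admissibility is automatic because $A$ is open and $\beta_v$ is continuous, so no extra hypothesis is needed. For uniqueness and maximality I would run the usual argument: if $\gamma_1,\gamma_2$ are geodesics with the same initial velocity $v$, then $\dot\gamma_1$ and $\dot\gamma_2$ are integral curves of $G$ through $v$, hence coincide on the intersection of their domains, and therefore so do $\gamma_1$ and $\gamma_2$; the standard connectedness-and-gluing argument then produces a unique maximal geodesic, defined on a maximal interval which, in accordance with the statement, we denote $[a,b)$ with $b\in(0,+\infty]$.

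The computation is entirely routine, so there is no genuinely hard step; what deserves to be flagged is only the point that distinguishes the pseudo-Finsler from the pseudo-Riemannian situation, namely that the spray $G$ is defined merely on the conic open set $A$, not on all of $TM\setminus{\bf 0}$. One must therefore (i) use the smoothness of the Chern Christoffel symbols $\Gamma^k_{ij}$ on $\pi^{-1}(\Omega)\cap A$ recalled in Subsection~\ref{chernsection}, and (ii) observe that openness of $A$ forces any integral curve of $G$ to remain in $A$ for a short time. I would also remark in passing that the degree-zero homogeneity $\nabla^{\lambda V}=\nabla^V$ makes the $\Gamma^k_{ij}$ homogeneous of degree $0$ in $y$, which gives the expected affine-reparametrization behaviour of geodesics, although this is not needed for the present statement.
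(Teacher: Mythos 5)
Your proof is correct and follows exactly the route the paper takes implicitly: the paper states this result without proof, appealing to the local geodesic equation $\ddot\gamma^k+\sum_{i,j}\dot\gamma^i\dot\gamma^j(\Gamma^k_{ij}\circ\dot\gamma)=0$ and standard ODE theory, which is precisely your geodesic-spray argument on the conic open set $A$. Your added remark that the spray lives only on $A$ (so integral curves are automatically $L$-admissible) is the one point worth making explicit in this setting, and you make it.
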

\begin{rem}
 If $\gamma:[a,b]\to M$ is a geodesic of $(M,L)$, then the function 
$L\circ\dot\gamma:[a,b]\to\R$ is constant. Indeed, taking into account the almost $g$-compatibility of the induced covariant derivative and Remark~2.9, we have \linebreak$(L\circ\dot\gamma)'=2g_{\dot\gamma}(D^{\dot\gamma}_\gamma\dot\gamma,\dot\gamma)=0$. If $L\circ\dot\gamma=0$, then $\gamma$ is called a {\it lightlike geodesic}.
\end{rem}
\begin{defi}
 Let $(M,L)$ be a pseudo-Finsler manifold. If $p\in M$, let $\mathcal D_p$ be the set of vectors $v$ in $A\cap T_pM$ satisfying the following condition: if $\gamma_v:\left[0,b\right)\to M$ is the maximal geodesic such that $\dot\gamma_v(0)=v$, then $b>1$. The {\it exponential map} of $(M,L)$ at $p$ is the mapping
 \[
  \exp_p:\mathcal D_p\to M,\quad v\mapsto\exp_p(v):=\gamma_v(1).
 \]
\end{defi}
 \begin{prop}
 The domain of the exponential map $\exp_p:\mathcal D_p\to M$ is an open subset of $A\mathop\cap T_pM$, and $\exp_p$ is smooth on $\mathcal D_p$. If $A\cap T_pM=T_pM\setminus\{0\}$, then $\exp_p$ is defined in an open subset of $0_p$, putting $\exp_p(0_p)=p$.
\end{prop}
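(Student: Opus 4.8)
The plan is to reduce everything to the elementary theory of flows of smooth vector fields; the main (and essentially only non-routine) point will be the behaviour of $\exp_p$ near the zero section, where the geodesic spray need not even be defined. First, on the open submanifold $A$ of $TM$ I would consider the second-order vector field $G\in\mathfrak X(A)$ given in the coordinates $(x^i,y^i)$ on $\pi^{-1}(\Omega)\cap A$ by
\[
 G=\sum_{k=1}^n y^k\frac{\partial}{\partial x^k}-\sum_{k=1}^n\Bigl(\sum_{i,j=1}^n y^iy^j\,\Gamma^k_{ij}\Bigr)\frac{\partial}{\partial y^k};
\]
since the functions $\Gamma^k_{ij}:\pi^{-1}(\Omega)\cap A\to\R$ are smooth (Subsection~\ref{chernsection}, cf.\ \cite{J13}) and the geodesic equation is coordinate independent, $G$ is a well-defined smooth vector field on $A$ whose integral curves are exactly the velocity lifts $t\mapsto\dot\gamma(t)$ of the geodesics of $(M,L)$. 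Letting $\Phi:\mathcal D\to A$ be the maximal flow of $G$, with $\mathcal D\subset\R\times A$ open and $\Phi$ smooth, one has for $v\in A$ that the maximal geodesic is $\gamma_v=\pi\circ\Phi(\cdot,v)$ on $[0,b_v)$, that $(1,v)\in\mathcal D$ if and only if $b_v>1$, and that $\exp_p(v)=\pi(\Phi(1,v))$.

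From this description, $\mathcal D_p=\{v\in A\cap T_pM:(1,v)\in\mathcal D\}$ is the intersection with $T_pM$ of the preimage of the open set $\mathcal D$ under the continuous map $v\mapsto(1,v)$; since $A\cap T_pM$ is open in $T_pM$ (because $A$ is open in $TM$), this shows $\mathcal D_p$ is open in $A\cap T_pM$. On $\mathcal D_p$ the exponential map is the restriction of the smooth map $v\mapsto\pi(\Phi(1,v))$, hence it is smooth.

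For the last assertion I would invoke the homogeneity of geodesics. Since $\nabla^V=\nabla^{\lambda V}$ for $\lambda>0$, the $\Gamma^k_{ij}$ are positive homogeneous of degree $0$ in the fibre variable, and substituting $\sigma(t)=\gamma_v(\lambda t)$ in the geodesic equation shows that $\sigma$ is the geodesic with $\dot\sigma(0)=\lambda v$; thus $\gamma_{\lambda v}(t)=\gamma_v(\lambda t)$ and $b_{\lambda v}=b_v/\lambda$. Assume now $A\cap T_pM=T_pM\setminus\{0\}$ and fix any norm $|\cdot|$ on $T_pM$; its unit sphere $\Sigma$ is compact and contained in $A$, so $\{0\}\times\Sigma\subset\mathcal D$. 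Since $\mathcal D$ is open and $\{0\}\times\Sigma$ is compact, a finite-subcover argument produces $\delta>0$ with $(-\delta,\delta)\times\Sigma\subset\mathcal D$, i.e.\ $b_u\ge\delta$ for every $u\in\Sigma$. Consequently, for $0<|v|<\delta$, writing $v=|v|\,u$ with $u\in\Sigma$ we get $b_v=b_u/|v|\ge\delta/|v|>1$, so $v\in\mathcal D_p$. Hence the open ball $\{v\in T_pM:|v|<\delta\}$ is contained in $\mathcal D_p\cup\{0_p\}$, and setting $\exp_p(0_p):=p$ the exponential map is defined on this neighbourhood of $0_p$; it is moreover continuous there, since $\exp_p(|v|\,u)=\gamma_u(|v|)\to\gamma_u(0)=p$ as $v\to0$, uniformly in $u\in\Sigma$ by continuity of $\Phi$ on $\{0\}\times\Sigma$.

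The first two paragraphs are the standard pseudo-Riemannian argument. The real obstacle is the third one: in the (pseudo-)Finsler case $G$ is in general not even defined along the zero section, so one cannot simply use openness of $\mathcal D$ around $(1,0_p)$ as in Riemannian geometry, and the homogeneity of geodesics together with compactness of the unit sphere is what replaces that argument.
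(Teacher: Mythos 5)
Your proof is correct, and the first half (openness of $\mathcal D_p$ and smoothness of $\exp_p$ away from the origin via the flow of the geodesic spray on $A$) is essentially the paper's argument, which simply invokes smooth dependence on initial data. Where you genuinely diverge is on the behaviour at $0_p$: the paper observes that the spray coefficients $v\mapsto\sum v^iv^j\Gamma^k_{ij}(v)$, being positive homogeneous of degree $2$ with degree-$1$ derivatives, extend to $C^1$ functions across the zero section, so that ordinary ODE theory applies directly at $0_p$; you instead keep the spray undefined at the zero section and use the reparametrization identity $\gamma_{\lambda v}(t)=\gamma_v(\lambda t)$ (valid because $\Gamma^k_{ij}$ is $0$-homogeneous and $A$ is conic) together with compactness of the unit sphere $\Sigma\subset A\cap T_pM$ to push a uniform existence time $\delta$ on $\Sigma$ down to existence past $t=1$ for all $v$ with $0<|v|<\delta$. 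Both arguments prove the statement as written. The trade-off is that the paper's $C^1$ extension yields more than is claimed --- it makes $\exp_p$ of class $C^1$ at $0_p$, which is what one ultimately needs to compute $d(\exp_p)_{0_p}=\mathrm{id}$ and conclude that $\exp_p$ is a local diffeomorphism near the origin --- whereas your homogeneity--compactness argument only gives definedness and continuity at $0_p$ (as you acknowledge), but is more elementary and correctly isolates why the hypothesis $A\cap T_pM=T_pM\setminus\{0\}$ is needed, namely to place the whole compact sphere $\Sigma$ inside $A$.
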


\begin{proof}
  The first statement is a consequence of the theorem on smooth dependence on the initial data of ODEs. To see the second statement, observe that the functions
 \[
  v\in T_pM\setminus\{0\}\mapsto \sum v^iv^j\Gamma^k_{ij}(v)\in\R;\quad k\in\{1,\dots,n\}
 \]
 are positive-homogeneous of degree 2, so they can be extended to $C^1$ functions at zero. Indeed, homogeneous functions of positive degree can be extended continuously to zero as zero, and the derivative of a homogeneous function of degree 2 is a homogeneous function of degree 1 (see also Proposition~\ref{fundamentalprop}). In fact, the extensions will be of class $C^1$ on $\pi^{-1}(\Omega)$. 
\end{proof}

\subsection{Jacobi operator and flag curvature}\label{curvature}
If we fix an $L$-admissible vector field $V$ in $\Omega\subset M$, the Chern connection $\nabla^V$ is an affine connection on $\Omega$, whose curvature tensor is given by
\begin{equation*}
R^V(X,Y)Z=\nabla^V_X\nabla^V_YZ-\nabla^V_Y\nabla^V_XZ-
\nabla^V_{[X,Y]}Z \quad ; \quad X,Y,Z\in\mathfrak{X}(\Omega).
\end{equation*}
The $\nabla^V$-covariant derivative of the Cartan tensor $C_V$ is the 
$(0,4)$ tensor defined by
\begin{multline*}
\nabla^V_XC_V(Y,Z,W)=X(C_V(Y,Z,W))-C_V(\nabla^V_XY,Z,W)\\-C_V(Y,\nabla^V_XZ,W)-
 C_V(Y, Z ,\nabla^V_XW)~.
\end{multline*}
It is straightforward to check that $\nabla^V_XC_V$ 
is trilinear, symmetric and 
\begin{equation}\label{formulilla}
\nabla^V_XC_V(V,Z,W)=-C_V(\nabla^V_XV,Z,W).
\end{equation}
%% \bb Besides, the above equation together with the symmetry of $\nabla^V_VC_V$ 
%% and Remark imply that, for any V , $C_V(\nabla^V_VV,Z,W)=0$, since
%% \[
%% %begin{multline*}
%% C_V(\nabla^V_VV,Z,W)=-\nabla^V_VC_V(V,Z,W)=
%% -\nabla^V_VC_V(Z,V,W)=C_V(\nabla^V_VZ,V,W)=0
%% \]
%% %end{multline*}\
%% As a consequence of the definition of $\nabla^V_XC_V$ and this last fact, 
%% on its turn, we obtain the following analogue of \eqref{formulilla}:
%% \begin{equation}\label{formulilla2}
%% \nabla^V_XC_V(\nabla^V_VV,Z,W)=-C_V(\nabla^V_X\nabla^V_VV,Z,W).
%% \end{equation}
%% \eb
Moreover, for every $X,Y,Z,W\in\mathfrak{X}(\Omega)$, the curvature tensor has 
the following symmetries (see \cite[Proposition 3.1]{J13}):
\begin{enumerate}[(i)]
\item  \[R^V(X,Y)=-R^V(Y,X)~;\]
\item \[g_V(R^V(X,Y)Z,W)+g_V(R^V(X,Y)W,Z)=2B^V(X,Y,Z,W)~,\]
where
\begin{multline*}
B^V(X,Y,Z,W)=\\
\nabla^V_YC_V(\nabla_X^VV,Z,W)-\nabla^V_XC_V(\nabla_Y^VV,Z,W)+C_V(R^V(Y,X)V,Z,W)~;
\end{multline*}
\item \[R^V(X,Y)Z+R^V(Y,Z)X+R^V(Z,X)Y=0~;\]
\item \begin{multline}\label{seisB}
g_V(R^V(X,Y)Z,W)-g_V(R^V(Z,W)X,Y)=\\
 B^V(Z,Y,X,W)+B^V(X,Z,Y,W)+B^V(W,X,Z,Y)\\
 +B^V(Y,W,Z,X)+B^V(W,Z,X,Y)+B^V(X,Y,Z,W)~.
\end{multline}
\end{enumerate}

%% \begin{equation}\label{BV}
%% B^V(X,Y,Z,W)=C_{V}(\nabla_{[X,Y]}^{V}V,Z,W)
%% \end{equation}
%% %% This Proposition is a reproduction of \cite[Proposition 3.1]{J13}, except for 
%% %% a unique novelty: formula \eqref{BV}, which we now prove:

%% Using the definition of $B^V$ and \eqref{formulilla}, we get
%% \begin{multline*}
%% B^V\left(X,Y,Z,W\right)=\\\nabla^V_XC_{V}\left(\nabla^V_VV,Z,W\right)
%% -\nabla^V_VC_{V}\left(\nabla_X^VV,Z,W\right)+ 
%% C_{V}\left(R^V\left(X,V\right)V,Z,W\right) =\\ 
%% -C_{V}\left(\nabla^V_X\nabla^V_VV,Z,W\right)+ C_{V}\left(\nabla^V_V\nabla_X^VV,Z,W\right)+ C_{V}\left(R^V\left(X,V\right)V,Z,W\right) =\\
%% C_{V}\left(\nabla^V_V\nabla_X^VV-\nabla^V_X\nabla^V_VV + R^V\left(X,V\right)V 
%% \,,\, Z \,,\,W\right)
%% \end{multline*}
 We can also define the Jacobi operator $R^\gamma$ along an $L$-admissible curve 
$\gamma:[a,b]\subset \R\rightarrow M$. Recall that the curve $\gamma$ is 
$L$-admissible if $\dot\gamma$ belongs to $A$ and consider
a  smooth variation $\Lambda:[a,b]\times (-\varepsilon,\varepsilon)\rightarrow M$, which is a two-parameter map. 
  Given $s_0\in (-\varepsilon,\varepsilon)$ and $t_0\in [a,b]$, we will denote by $\gamma_{s_0}:[a,b]\rightarrow M$ the curve defined as $\gamma_{s_0}(t)=\Lambda(t,s_0)$ for every $t\in [a,b]$ and by
$\beta_{t_0}:(-\varepsilon,\varepsilon)\rightarrow M$ the curve defined as $\beta_{t_0}(s)=\Lambda(t_0,s)$ for every $s\in (-\varepsilon,\varepsilon)$,  which are the longitudinal and the transverse curves of the variation,  respectively. 
Moreover,  we will use the notation $\Lambda_t(t,s)=\dot\gamma_s(t)$ and 
$\Lambda_s(t,s)=\dot\beta_t(s)$ and we will denote by $\Lambda^*(TM)$ the vector bundle over $[a,b]\times (-\varepsilon,\varepsilon)$ induced by $\pi:TM\rightarrow M$ through $\Lambda$. Then the space of smooth sections 
of $\Lambda^*(TM)$ will be denoted as ${\mathfrak X}(\Lambda)$. Observe that a vector field $V\in {\mathfrak X}(\Lambda)$ induces vector fields in $\mathfrak X(\gamma_{s_0})$ and $\mathfrak X(\beta_{t_0})$ for every $s_0\in (-\varepsilon,\varepsilon)$ and $t_0\in [a,b]$.
%Given $Z,V\in{\mathfrak X}(\Lambda)$ we will denote $Z_t^V=D_{\gamma_t}^VZ$ (see Proposition \ref{curveconnection}). 
We will say that $V$ is $L$-admissible if $V(t,s)\in A$ for every $(t,s)\in [a,b]\times (-\varepsilon,\varepsilon)$.
  When $\Lambda$ lies in the domain of a coordinate system $x^1,\ldots,x^n$, we will denote $\Lambda^i=x^i\circ\Lambda$.
Notice that when we have a variation of curves (or more generally a two parameters map), as the Chern connection is free of torsion, we have the following property:
\begin{equation}\label{commut}
D_{\gamma_s}^V{\dot\beta_t}=D_{\beta_t}^V{\dot\gamma_s},
\end{equation} 
(see also \cite[Proposition 3.2]{J13}).
  We say that the variation $\Lambda$ is $L$-admissible if $\gamma_s$ is $L$-admissible for every $s\in (-\varepsilon,\varepsilon)$. Moreover, we will denote by $W$ the variational vector field of $\Lambda$ along $\gamma$, namely, $W(t)=\Lambda_s(t,0)$ for every $t\in[a,b]$. 
   If $\Lambda$ is $L$-admissible  and $Z\in \mathfrak{X}(\Lambda)$, we can define 
  \[R^\Lambda(Z):=D_{\gamma_{s}}^{\Lambda_t}D_{\beta_{t}}^{\Lambda_t}Z-D_{\beta_{t}}^{\Lambda_t}D_{\gamma_{s}}^{\Lambda_t}Z,\]
which is a smooth vector field along  $\Lambda$.  Now observe that given an $L$-admissible curve $\gamma:[a,b]\rightarrow M$, and an arbitrary smooth vector field $W$ along $\gamma$ there always exists a (non-unique) $L$-admissible variation $\Lambda$ of $\gamma$ with $W$ as variational vector field. In fact, it is well-known that we can choose a 
variation $\Lambda:[a,b]\times (-\varepsilon,\varepsilon)\rightarrow M$ of $\gamma$ having $W$ as a variation vector field. As $\Lambda$ is at least $C^1$, being $A$ an open subset  and  $[a,b]$ compact, we can choose a smaller $\varepsilon$ if necessary in such a way that $\Lambda$ is $L$-admissible. Moreover, following \cite{J14}, we can define 
\begin{equation}
\label{jacobiop}
R^\gamma(\dot\gamma,W)Z:=R^\Lambda(\tilde{Z}),
\end{equation}
where $Z$ is a smooth vector field along $\gamma$ and $\tilde{Z}$ is a smooth extension of $Z$ to $\Lambda$. As was proven in \cite{J14}, the operator $R^\gamma$ is well-defined because it does not depend on the choice of the variation $\Lambda$ neither on the extension of $Z$.

 In general,  $R^\gamma(\dot\gamma,W)Z$ is not tensorial in $W$, since 
%% the value at every instant $t\in [a,b]$ of the vector field $R^\gamma(\dot\gamma,W)Z$ depends on the curve $\gamma$, and on the vectors $W(t)$, $\nabla_\gamma^{\dot\gamma} W (t)$ and  $Z(t)$.
it is not $C^\infty([a,b])$-linear in $W$,  but as a consequence of \cite[Corollary 1.3]{J14}, it is when $Z=\dot\gamma$. 
%However, in the particular case 
%$Z=\dot\gamma$, we have, as a consequence of \cite[Corollary 1.3]{J14}, that 
%$R^\gamma(\dot\gamma,W)Z$ is $C^\infty([a,b])$-linear in $W$. Thus, 
%$R^\gamma(\dot\gamma,W)\dot\gamma$ is tensorial in $W$.
%% the dependence on $\nabla_\gamma^{\dot\gamma} W (t)$ disappears.  
%% and for  smooth functions $f_i:[a,b]\rightarrow \R$ and smooth vector fields $W_i$ along $\gamma$, $i=1,\ldots,r$, we have that 
%% \begin{equation}\label{linearR}
%% R^\gamma(\dot\gamma,\sum_{i=1}^rf_iW_i)\dot\gamma=\sum_{i=1}^rf_i R^\gamma(\dot\gamma,W_i)\dot\gamma
%% \end{equation}. 
Moreover, when $\gamma$ is a geodesic
\[R^\gamma(\dot\gamma,W)\dot\gamma=R_{\dot\gamma}(\dot\gamma,W)\dot\gamma,\]
where we denote by $R_v$ the Chern curvature (see \cite{J14} or 
\cite[Formula (3.3.2) and Exercise 3.9.6]{BaChSh00}). Finally, for any $v\in A$ and $w\in T_{\pi(v)}M$, the flag curvature can be computed as 
\[K_v(w)=\frac{g_v(R^{\gamma_v}(\dot\gamma_v,W)W(t_0),v)}{L(v)g_v(w,w)-g_v(v,w)^2},\]
where $\gamma_v$ is the geodesic such that $\dot\gamma_v(t_0)=v$ and $W$ is a smooth vector field
along $\gamma$ such that $W(t_0)=w$ (see \cite[Remark 2.3]{J14}). 
\section{Variation of the energy}\label{variations}

Given a pseudo-Finsler manifold $(M,L)$, we shall denote by $C_L(M,[a,b])$ the space of $L$-admissible piecewise smooth curves in $M$ defined on the closed interval $[a,b]$. We write $T_\gamma C_L(M,[a,b])$ for the set of all $L$-admissible, piecewise smooth continuous vector fields along $\gamma\in C_L(M,[a,b])$ with the same breaks as $\gamma$. The {\it energy functional} on $C_L(M,[a,b])$ is the function
\begin{equation}
 E:\gamma\in C_L(M,[a,b])\mapsto E(\gamma):=\frac12\int_b^a L\circ\dot\gamma~dt\ \in\R.
\end{equation}
We are going to show that the geodesics of $(M,L)$ are the critical points of $E$. To do this, we calculate the first and second variation formula for $E$.

 Let $\gamma\in C_L(M,[a,b])$, and consider the piecewise smooth variation $\Lambda:[a,b]\times(-\varepsilon,\varepsilon)\to M,\ (t,s)\mapsto\Lambda(t,s)$ of $\gamma$ with breaks $t_0:=a<t_1<t_2<\dots< t_{h}<t_{h+1}:=b$ and recall the notation for variations of the last section. 
%Then the piecewise smooth curves
%\[
% \gamma_s:[a,b]\to M,\ t\mapsto\gamma_s(t):=\Lambda(t,s)\quad\mbox{($s$ is fixed)}
%\]
%are the {\it longitudinal curves}, the smooth curves
%\[
% \beta_t:[-\varepsilon,\varepsilon]\to M,\ s\mapsto\beta_t(s):=\Lambda(t,s)
%\quad\mbox{($t$ is fixed)}
%\]
%are the {\it transverse curves} of the variation. The variation is called 
%{\it $L$-admissible} if its longitudinal curves are $L$-admissible. A vector field along $\Lambda$ is a smooth map
%\[
%V:[a,b]\times(-\varepsilon,\varepsilon)\to TM
%\]
%such that $\pi\circ V=\Lambda$; $\mathfrak X(\Lambda)$ stands for the module of these vector fields. A vector field $V\in\mathfrak X(\Lambda)$ induces vector fields in $\mathfrak X(\gamma_s)$ and $\mathfrak X(\beta_t)$ for every $s\in(-\varepsilon,\varepsilon)$ and $t\in[a,b]$, respectively. The torsion-freeness of the Chern connection implies the symmetry property
%\begin{equation}
% D^V_{\gamma_s}\dot\beta_t=D^V_{\beta_t}\dot\gamma_s
%\end{equation}
%(see \cite[Proposition~3.2]{J13}). The vector field $W\in\mathfrak X(\gamma)$ given by $W(t):=\dot\beta_t(0)$ $(t\in[a,b])$ is the {\it variation vector field} of $\Lambda$.\er
Let us recall the definition of the {\it Legendre transformation} of a pseudo-Finsler metric, namely, the map $\mathscr{L}_L: A\rightarrow TM^*$, where $\mathscr{L}_L(v)$ is defined as the one-form given by $\mathscr{L}_L(v)(w)=g_v(v,w)$ for every $w\in T_{\pi(v)}M$ and $g$ is the fundamental tensor of $L$.
\begin{prop}\label{firstvar}
 Keeping the notation introduced above, let $\Lambda$ be an \linebreak $L$-admissible 
piecewise smooth variation of $\gamma$. Then we have the 
{\it first variation formula}
 \begin{equation}\label{firstvariation}
  \begin{aligned}[m]
  E'(0)&:=\frac{d(E(\gamma_s))}{ds}|_{s=0}\\%:=(s\mapsto E(\gamma_s))'(0)\\
  &=-\int_a^bg_{\dot\gamma}(W,D^{\dot\gamma}_\gamma\dot\gamma)~dt +
g_{\dot\gamma}(W,\dot\gamma)|^b_a\\
  &\quad\quad\quad +\sum_{i=1}^h\big(\mathcal L_L(\dot\gamma(t_i^+))(W(t_i))-\mathcal L_L(\dot\gamma(t_i^-))(W(t_i))\big),
  \end{aligned}
 \end{equation}
 where $\dot\gamma(t_i^+)$ (resp. $\dot\gamma(t_i^-)$) denote the right (resp.\ left)  velocity at the breaks.
\end{prop}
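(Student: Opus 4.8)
The plan is to compute $\frac{d}{ds}E(\gamma_s)$ by differentiating under the integral sign and then integrating by parts, using the almost $g$-compatibility of the induced covariant derivative together with the vanishing of the Cartan tensor on its reference vector (Remark~\ref{porvenir}). First, since $E(\gamma_s)=\frac12\int_a^b L(\dot\gamma_s)\,dt$ (modulo the orientation of the integral), I would pass the derivative inside to get $E'(0)=\frac12\int_a^b \frac{\partial}{\partial s}\big(L(\Lambda_t(t,s))\big)\big|_{s=0}\,dt$. The key observation is that along each smooth piece the function $s\mapsto L(\Lambda_t(t,s))$ can be viewed as $g_W(\,\cdot\,,\cdot\,)$ evaluated on $\Lambda_t$, namely $L\circ\Lambda_t=g_{\Lambda_t}(\Lambda_t,\Lambda_t)$ by Proposition~\ref{fundamentalprop}; applying \eqref{almostmetric} to the curve $\beta_t$ with admissible field $W=\Lambda_t$ (restricted to $\beta_t$) gives
\[
\frac{\partial}{\partial s}\,g_{\Lambda_t}(\Lambda_t,\Lambda_t)=2\,g_{\Lambda_t}\!\big(D^{\Lambda_t}_{\beta_t}\Lambda_t,\Lambda_t\big)+2\,C_{\Lambda_t}\!\big(D^{\Lambda_t}_{\beta_t}\Lambda_t,\Lambda_t,\Lambda_t\big),
\]
and the Cartan term vanishes identically by Remark~\ref{porvenir}. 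Thus the integrand reduces to $g_{\dot\gamma}\big(D^{\dot\gamma}_{\beta_t}\dot\gamma_s\big|_{s=0},\dot\gamma\big)$ on each subinterval $[t_{i},t_{i+1}]$.

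Next I would use the symmetry \eqref{commut}, $D^{\Lambda_t}_{\beta_t}\dot\gamma_s=D^{\Lambda_t}_{\gamma_s}\dot\beta_t$, which holds because the Chern connection is torsion-free, to rewrite the integrand as $g_{\dot\gamma}\big(D^{\dot\gamma}_{\gamma}W,\dot\gamma\big)$ (evaluated at $s=0$, where $\dot\beta_t(0)=W(t)$). Then on each smooth piece I integrate by parts: applying \eqref{almostmetric} again, this time to $\gamma$ itself with admissible field $\dot\gamma$, gives $\big(g_{\dot\gamma}(W,\dot\gamma)\big)'=g_{\dot\gamma}(D^{\dot\gamma}_\gamma W,\dot\gamma)+g_{\dot\gamma}(W,D^{\dot\gamma}_\gamma\dot\gamma)+2C_{\dot\gamma}(D^{\dot\gamma}_\gamma\dot\gamma,W,\dot\gamma)$, and once more the Cartan term drops by Remark~\ref{porvenir}. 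Hence $g_{\dot\gamma}(D^{\dot\gamma}_\gamma W,\dot\gamma)=\big(g_{\dot\gamma}(W,\dot\gamma)\big)'-g_{\dot\gamma}(W,D^{\dot\gamma}_\gamma\dot\gamma)$, and integrating over $[t_i,t_{i+1}]$ and summing over $i=0,\dots,h$ produces the bulk term $-\int_a^b g_{\dot\gamma}(W,D^{\dot\gamma}_\gamma\dot\gamma)\,dt$ together with a telescoping collection of boundary contributions $\sum_i g_{\dot\gamma}(W,\dot\gamma)\big|_{t_i^+}^{t_{i+1}^-}$.

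Finally I would reorganize the telescoped boundary sum. The genuine endpoints $t_0=a$ and $t_{h+1}=b$ contribute $g_{\dot\gamma}(W,\dot\gamma)\big|_a^b$; at each interior break $t_i$ ($1\le i\le h$), $W$ is continuous but $\dot\gamma$ may jump, so the two adjacent pieces contribute $g_{\dot\gamma(t_i^-)}(W(t_i),\dot\gamma(t_i^-)) - g_{\dot\gamma(t_i^+)}(W(t_i),\dot\gamma(t_i^+))$ with the sign arranged so that, after recalling $\mathscr L_L(v)(w)=g_v(v,w)$, this equals $-\big(\mathscr L_L(\dot\gamma(t_i^+))(W(t_i))-\mathscr L_L(\dot\gamma(t_i^-))(W(t_i))\big)$; combining signs carefully with the $-\tfrac12\int$ versus $\tfrac12\int$ bookkeeping from the definition yields exactly \eqref{firstvariation}. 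The main obstacle is purely bookkeeping rather than conceptual: tracking the sign conventions (the reversed limits $\int_b^a$ in the definition of $E$, the orientation of the jumps at the breaks) so that the Legendre-transform terms appear with the stated sign, and making sure the differentiation-under-the-integral and the piecewise integration by parts are justified on each smooth subinterval with the one-sided limits matching up correctly at the $t_i$. No hard analysis is needed beyond smoothness of $\Lambda$ on each piece and compactness of $[a,b]$.
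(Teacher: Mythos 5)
Your proof is correct and follows essentially the same route as the paper's: differentiate under the integral, use almost $g$-compatibility together with Remark~\ref{porvenir} to kill the Cartan terms, swap derivatives via torsion-freeness \eqref{commut}, and integrate by parts piecewise in $t$. The sign discrepancy you wrestle with at the breaks is not an error in your argument: with $E=\frac12\int_a^b L\circ\dot\gamma\,dt$ the jump terms come out with a minus sign (as in O'Neill's semi-Riemannian formula), so the $+$ in front of the sum in \eqref{firstvariation} (and the reversed limits $\int_b^a$ in the definition of $E$) are typos in the paper rather than something your bookkeeping needs to absorb -- your telescoping computation is the correct one and is irrelevant to the subsequent applications, which only use that the jumps vanish at a critical point.
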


\begin{proof}
As the variation is piecewise smooth, we get
\begin{align}
\nonumber\frac{d}{ds}E(\gamma_s)=&\frac 12\int_a^b \frac{d}{ds} g_{\dot\gamma_s}(\dot\gamma_s,\dot\gamma_s)~dt\\\nonumber
=&\int_a^b \big( g_{\dot\gamma_s}(D_{\beta_t}^{\dot\gamma_s}\dot\gamma_s,\dot\gamma_s)
+C_{\dot\gamma_s}(D_{\beta_t}^{\dot\gamma_s}\dot\gamma_s,\dot\gamma_s,\dot\gamma_s)\big)~dt\\\label{basic}
=& \int_a^bg_{\dot\gamma_s}(D_{\gamma_s}^{\dot\gamma_s} \dot\beta_t,\dot\gamma_s)~dt,
\end{align}
where we have used first that the Chern connection is almost $g$-compatible and then Remark \ref{porvenir} and \eqref{commut}. Moreover, applying again that the Chern connection is almost $g$-compatible,
we find that
\begin{equation}\label{inter}
g_{\dot\gamma}(D_\gamma^{\dot\gamma} W,\dot\gamma)=\frac{d}{dt}\big(g_{\dot\gamma}( W,\dot\gamma)\big)-g_{\dot\gamma} (W,D_\gamma^{\dot\gamma}\dot\gamma),
\end{equation}
because $C_{\dot\gamma}(D_{\gamma}^{\dot\gamma}\dot\gamma,W,\dot\gamma)=0$ (again by Remark \ref{porvenir}). Substituting \eqref{inter}  in \eqref{basic} with $s=0$ and integrating, we get finally \eqref{firstvariation}.
\end{proof}
%% Let us denote by $C_L(M,[a,b])$ the subset of $L$-admissible piecewise smooth curves $\gamma:[a,b]\rightarrow M$ and for any $\gamma\in C_L(M,[a,b])$, define  $T_\gamma C_L(M,[a,b])$ as  the subset of $L$-admissible continuous piecewise smooth vector fields along $\gamma$ with the same breaks as $\gamma$.
Proposition \ref{firstvar} allows us to define formally the differential of $E$ in $\gamma$ as the map $dE_\gamma:T_\gamma C(M,[a,b])\rightarrow \R$  given  by
\begin{multline*}
dE_\gamma(W)=-\int_a^bg_{\dot\gamma}(W,D_\gamma^{\dot\gamma}\dot\gamma)~dt + \left[g_{\dot\gamma}(W,\dot\gamma)\right]_a^b
 \\ +\sum_{i=1}^h\big( \mathscr{L}_L(\dot\gamma(t_i^+))(W(t_i))-
\mathscr{L}_L(\dot\gamma(t_i^-))(W(t_i))\big),
\end{multline*}
for any $W\in T_\gamma C_L(M,[a,b])$. 

From now on, given a smooth $L$-admissible curve $\gamma:[a,b]\rightarrow M$ 
and \linebreak$W\in {\mathfrak X}(\gamma)$, we write $W'=D^{\dot\gamma}_\gamma W$.  As we shall see later, the critical points of the energy functional are geodesics when some boundary conditions are imposed.  Now we compute the second variation formula. 
\begin{prop}\label{secondvar}
Let $\gamma:[a,b]\rightarrow M$ be a  geodesic of $(M,L)$ and consider
an $L$-admissible smooth variation $\Lambda$. Then with the above notation
\begin{multline}\label{secondvariation}
E''(0)=\frac{d^2}{ds^2}E(\gamma_s)\left|_{s=0}\right.\\
=\int_a^b \left(-g_{\dot\gamma}(R^\gamma(\dot\gamma,W)W,\dot\gamma)+g_{\dot\gamma}(W',W')\right)dt+
\left[g_{\dot\gamma}(D^{\dot\gamma}_{\beta_t}\dot\beta_t|_{s=0},\dot\gamma)\right]_a^b,
\end{multline}
where $D^{\dot\gamma}_{\beta_t}\dot\beta_t|_{s=0}$ is the transverse acceleration vector field  (cf. \cite[page 266]{oneill}) of the variation.
\end{prop}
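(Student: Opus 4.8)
The plan is to differentiate the first variation formula \eqref{firstvariation} a second time, rather than starting over from the definition of $E$. Since $\gamma$ is a geodesic and the variation $\Lambda$ is smooth (no breaks), the first variation at an arbitrary nearby longitudinal curve $\gamma_s$ reads
\[
E'(s)=\frac{d}{ds}E(\gamma_s)=\int_a^b g_{\dot\gamma_s}\bigl(D^{\dot\gamma_s}_{\gamma_s}\dot\beta_t,\dot\gamma_s\bigr)\,dt,
\]
which is exactly line \eqref{basic} from the proof of Proposition \ref{firstvar}, valid for all $s$. So I would take $\frac{d}{ds}$ of this integrand at $s=0$. First I would apply almost $g$-compatibility \eqref{almostmetric} along the transverse curve $\beta_t$ to the function $s\mapsto g_{\dot\gamma_s}(D^{\dot\gamma_s}_{\gamma_s}\dot\beta_t,\dot\gamma_s)$, producing three terms: one with $D^{\dot\gamma_s}_{\beta_t}D^{\dot\gamma_s}_{\gamma_s}\dot\beta_t$, one with $D^{\dot\gamma_s}_{\beta_t}\dot\gamma_s$, and a Cartan term $2C_{\dot\gamma_s}(D^{\dot\gamma_s}_{\beta_t}\dot\gamma_s,D^{\dot\gamma_s}_{\gamma_s}\dot\beta_t,\dot\gamma_s)$.

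Next I would simplify at $s=0$ using that $\gamma=\gamma_0$ is a geodesic, so $D^{\dot\gamma}_\gamma\dot\gamma=0$ and, by Remark \ref{porvenir}, $C_{\dot\gamma}(\cdot,\cdot,\dot\gamma)=0$; this kills the Cartan term. For the term $g_{\dot\gamma}(D^{\dot\gamma}_\gamma\dot\beta_t,D^{\dot\gamma}_{\beta_t}\dot\gamma)$, I use the torsion-free identity \eqref{commut} $D^{V}_{\gamma_s}\dot\beta_t=D^V_{\beta_t}\dot\gamma_s$, so at $s=0$ this becomes $g_{\dot\gamma}(W',W')$ once I note $D^{\dot\gamma}_{\beta_t}\dot\gamma_s|_{s=0}=D^{\dot\gamma}_\gamma W=W'$ (again using \eqref{commut}). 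For the remaining term $g_{\dot\gamma}(D^{\dot\gamma}_{\beta_t}D^{\dot\gamma}_{\gamma_s}\dot\beta_t,\dot\gamma)$, I rewrite $D^{\dot\gamma_s}_{\gamma_s}\dot\beta_t=D^{\dot\gamma_s}_{\beta_t}\dot\gamma_s$ via \eqref{commut}, so the operator is $D^{\dot\gamma}_{\beta_t}D^{\dot\gamma}_{\beta_t}\dot\gamma_s$; comparing with the definition \eqref{jacobiop} of the Jacobi operator, $R^\Lambda(\dot\beta_t)=D^{\Lambda_t}_{\gamma_s}D^{\Lambda_t}_{\beta_t}\dot\beta_t-D^{\Lambda_t}_{\beta_t}D^{\Lambda_t}_{\gamma_s}\dot\beta_t$, and using \eqref{commut} once more to turn $D_{\gamma_s}D_{\beta_t}\dot\beta_t$ into $D_{\gamma_s}D_{\gamma_s}\dot\beta_t=D_{\gamma_s}\bigl(D^{\dot\gamma_s}_{\beta_t}\dot\gamma_s\bigr)$, I can extract $g_{\dot\gamma}(R^\gamma(\dot\gamma,W)W,\dot\gamma)$ together with a total $t$-derivative $\frac{d}{dt}g_{\dot\gamma}(D^{\dot\gamma}_{\beta_t}\dot\beta_t|_{s=0},\dot\gamma)$, again invoking geodesy and Remark \ref{porvenir} to discard Cartan contributions and the $g_{\dot\gamma}(D^{\dot\gamma}_{\beta_t}\dot\beta_t,D^{\dot\gamma}_\gamma\dot\gamma)$ piece. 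Integrating the total derivative over $[a,b]$ yields the boundary term $\bigl[g_{\dot\gamma}(D^{\dot\gamma}_{\beta_t}\dot\beta_t|_{s=0},\dot\gamma)\bigr]_a^b$.

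The delicate bookkeeping step — and the one I would be most careful about — is disentangling the double covariant derivative $D_{\beta_t}D_{\gamma_s}\dot\beta_t$ into the Jacobi term plus a total $t$-derivative: one must choose the order of the two applications of \eqref{commut} so that the curvature operator appears in precisely the form $R^\gamma(\dot\gamma,W)W$ matching \eqref{jacobiop} with $Z=\dot\beta_t$ extended from $W$, and then recognize that the leftover mixed term assembles into $\frac{d}{dt}g_{\dot\gamma}(D^{\dot\gamma}_{\beta_t}\dot\beta_t,\dot\gamma)$ via \eqref{almostmetric} with all Cartan terms annihilated because one slot is always $\dot\gamma$ and $D^{\dot\gamma}_\gamma\dot\gamma=0$. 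A minor point worth stating explicitly: since the variation is smooth there are no break contributions, so unlike \eqref{firstvariation} there is no sum over $t_i$; and the identity \eqref{almostmetric} is legitimately applied along $\beta_t$ because $\dot\gamma_s$ is $L$-admissible for the (possibly shrunken) $\varepsilon$. Collecting the three surviving pieces gives \eqref{secondvariation}.
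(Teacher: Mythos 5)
Your overall route is exactly the paper's: differentiate \eqref{basic} in $s$ under the integral sign using almost $g$-compatibility along $\beta_t$, discard the Cartan term by Remark \ref{porvenir} (this needs only that one slot is the reference vector $\dot\gamma_s$, not that $\gamma_0$ is a geodesic), identify $g_{\dot\gamma}(D^{\dot\gamma}_{\gamma}\dot\beta_t,D^{\dot\gamma}_{\beta_t}\dot\gamma_s)|_{s=0}=g_{\dot\gamma}(W',W')$ via \eqref{commut}, and handle the remaining double derivative with the Jacobi operator. Up to one step, this is the paper's proof.

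That one step, as written, is false and would derail the computation if carried out literally. Equation \eqref{commut} asserts $D^{V}_{\gamma_s}\dot\beta_t=D^{V}_{\beta_t}\dot\gamma_s$, i.e.\ it equates the two \emph{mixed} derivatives of the variation; it does not allow you to replace the transverse acceleration $D^{\dot\gamma_s}_{\beta_t}\dot\beta_t$ by $D^{\dot\gamma_s}_{\gamma_s}\dot\beta_t$, so turning ``$D_{\gamma_s}D_{\beta_t}\dot\beta_t$ into $D_{\gamma_s}D_{\gamma_s}\dot\beta_t$'' is not an instance of \eqref{commut}. Had that replacement been legitimate, the surviving total derivative would be $\frac{d}{dt}g_{\dot\gamma}(W',\dot\gamma)$ and the boundary contribution would read $\left[g_{\dot\gamma}(W',\dot\gamma)\right]_a^b$, contradicting the boundary term $\left[g_{\dot\gamma}(D^{\dot\gamma}_{\beta_t}\dot\beta_t|_{s=0},\dot\gamma)\right]_a^b$ that you correctly state; so your own conclusion is inconsistent with this manipulation. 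The correct (and shorter) bookkeeping, which is what the paper does, is to apply the identity $D^{\dot\gamma_s}_{\beta_t}D^{\dot\gamma_s}_{\gamma_s}\dot\beta_t=D^{\dot\gamma_s}_{\gamma_s}D^{\dot\gamma_s}_{\beta_t}\dot\beta_t-R^{\gamma_s}(\dot\gamma_s,\dot\beta_t)\dot\beta_t$ coming from \eqref{jacobiop} with $Z=\dot\beta_t$ (whose restriction at $s=0$ is $W$), and then to leave $D^{\dot\gamma}_{\gamma}\bigl(D^{\dot\gamma}_{\beta_t}\dot\beta_t\bigr)$ untouched: since $\gamma$ is a geodesic and the Cartan term vanishes by Remark \ref{porvenir}, one has $g_{\dot\gamma}\bigl(D^{\dot\gamma}_{\gamma}(D^{\dot\gamma}_{\beta_t}\dot\beta_t),\dot\gamma\bigr)=\frac{d}{dt}g_{\dot\gamma}(D^{\dot\gamma}_{\beta_t}\dot\beta_t,\dot\gamma)$ at $s=0$, which integrates to the stated boundary term. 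With that correction your argument coincides with the paper's.
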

\begin{proof}
We will use Remark \ref{porvenir} along the proof without further comment. Using \eqref{basic} and the almost $g$-compatibility of the Chern connection, we get
\begin{multline*}
\frac{d^2}{ds^2}E(\gamma_s)=\frac{d}{ds}\int_a^bg_{\dot\gamma_s}(D_{\gamma_s}^{\dot\gamma_s} \dot\beta_t
,\dot\gamma_s)dt=\int_a^b\frac{d}{ds}g_{\dot\gamma_s}(D_{\gamma_s}^{\dot\gamma_s} \dot\beta_t
,\dot\gamma_s)dt\\
=\int_a^b\left(g_{\dot\gamma_s}(D_{\beta_t}^{\dot\gamma_s}D_{\gamma_s}^{\dot\gamma_s} \dot\beta_t
,\dot\gamma_s)+g_{\dot\gamma_s}(D_{\gamma_s}^{\dot\gamma_s} \dot\beta_t
,D_{\beta_t}^{\dot\gamma_s}\dot\gamma_s)\right)dt.
\end{multline*}
 Since $D_{\beta_t}^{\dot\gamma_s}D_{\gamma_s}^{\dot\gamma_s} \dot\beta_t=D_{\gamma_s}^{\dot\gamma_s}D_{\beta_t}^{\dot\gamma_s} \dot\beta_t-R^{\gamma_s}(\dot\gamma_s,\dot\beta_t)\dot\beta_t$ (see \eqref{jacobiop}),
\begin{multline}\label{almost}
\frac{d^2}{ds^2}E(\gamma_s)=\int_a^b g_{\dot\gamma_s}(D_{\gamma_s}^{\dot\gamma_s}D_{\beta_t}^{\dot\gamma_s} \dot\beta_t-R^{\gamma_s}(\dot\gamma_s,\dot\beta_t)\dot\beta_t
,\dot\gamma_s)dt\\+\int_a^b g_{\dot\gamma_s}(D_{\gamma_s}^{\dot\gamma_s} \dot\beta_t
,D_{\beta_t}^{\dot\gamma_s}\dot\gamma_s) dt.
\end{multline}
Finally,  as $\gamma_0=\gamma$ is a geodesic, we have 
$g_{\dot\gamma_s}(D_{\gamma_s}^{\dot\gamma_s}D_{\beta_t}^{\dot\gamma_s} \dot\beta_t,
\dot\gamma_s)=\frac{d}{dt}(g_{\dot\gamma_s}(D_{\beta_t}^{\dot\gamma_s} \dot\beta_t,
\dot\gamma_s))$ for $s=0$, and using this in \eqref{almost}, integrating
and recalling \eqref{commut}, we get \eqref{secondvariation}.
\end{proof}
Observe that the transverse acceleration $D^{\dot\gamma}_{\beta_t}\dot\beta_t|_{s=0}$ depends not only on the vector field $W$ along $\gamma$, but  also  on the variation $\Lambda$. We will see later that the dependence on the variation disappears when we put certain boundary conditions.
\subsection{Submanifolds and second fundamental form}\label{submani}
We refer the reader to \cite{oneill} for the basic notions and notation on submanifolds in semi-Riemannian manifolds.
%\begin{defi}
%Let $P$ be a submanifold of a manifold $M$ endowed with a pseudo-Finsler metric $L$ and denote by $j:P\rightarrow M$ the immersion of $P$ in $M$. We say that $P$ is non-degenerate if $j^*(L)$ (the pullback of $L$ by $j$) is a non-degenerate pseudo-Finsler metric on $P$.
%\end{defi}
Let us assume that $(M,L)$ is a pseudo-Finsler manifold and $P\subset M$ a submanifold of $M$. We  denote the tangent bundle of $P$ as $TP$ and define the {\it normal bundle} $TP^\perp$ of  $P$ as the  set of  vectors $v\in A$ such that  $\pi(v)\in P$ and $g_v(v,w)=0$ for every $w\in T_{\pi(v)}P$.  We  write $T_pP^\perp=TP^\perp\cap T_pM$ for every $p\in P$.  This  is a conic subset, namely, if $v\in T_pP^\perp$, then $\lambda v\in T_pP^\perp$ for every $\lambda>0$.   Notice that even though $TP^\perp$  is not necessarily a fiber bundle over $P$, it admits a structure of submersion.  By abuse of notation, we write also $\pi$ for the restriction to $TP^\perp$ of the natural projection $\pi:TM\rightarrow M$. Let $P_0:=\{p\in P: \exists v\in TP^\perp, \pi(v)=p\}$, $r:=\dim P$, and recall that $n=\dim M$.

\begin{lemma}
 If $TP^\perp$ is not empty, then it is an $n$-dimensional submanifold 
of $TM$. The subset $P_0$ is open in $P$, and the map 
$\pi:TP^\perp\rightarrow P_0$ is a submersion. In particular, for every $p\in P$,
$T_pP^\perp$ is a submanifold of $T_pM$ of dimension $n-r$. 
\end{lemma}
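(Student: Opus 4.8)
The plan is to realise $TP^\perp$ as the preimage, under the Legendre transformation $\mathscr{L}_L\colon A\to T^*M$, of the conormal bundle of $P$, and then to use that $\mathscr{L}_L$ is a local diffeomorphism. Recall that the conormal bundle $N^*P=\{\alpha\in T^*_pM\colon p\in P,\ \alpha|_{T_pP}=0\}$ is a submanifold of $T^*M$ of dimension $n$ whose bundle projection $\pi_{T^*M}\colon N^*P\to P$ is a surjective submersion. By the very definitions of $TP^\perp$ and of $\mathscr{L}_L$, a vector $v\in A$ lies in $TP^\perp$ if and only if $\mathscr{L}_L(v)\in N^*P$: indeed $\mathscr{L}_L(v)$ is a covector at $\pi(v)$, so this membership forces $\pi(v)\in P$, and then the vanishing of $\mathscr{L}_L(v)(w)=g_v(v,w)$ for every $w\in T_{\pi(v)}P$ is precisely the condition defining $TP^\perp$. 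Hence $TP^\perp=\mathscr{L}_L^{-1}(N^*P)$.

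The key step is to show that $\mathscr{L}_L$ is a local diffeomorphism. As $\dim A=2n=\dim T^*M$, it is enough to see that $\mathscr{L}_L$ is an immersion. If $\xi\in T_vA$ satisfies $\df\mathscr{L}_L(\xi)=0$, then differentiating the identity $\pi_{T^*M}\circ\mathscr{L}_L=\pi$ yields $\df\pi(\xi)=0$, so $\xi$ is vertical; and on vertical vectors $\df\mathscr{L}_L$ at $v$ is the derivative of the fibrewise map $w\mapsto g_w(w,\cdot)$ on $A\cap T_{\pi(v)}M$, which a short computation based on Proposition~\ref{fundamentalprop} and the definition of $g_v$ identifies with the linear map $u\in T_{\pi(v)}M\mapsto g_v(u,\cdot)\in T^*_{\pi(v)}M$. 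Since $g_v$ is nondegenerate this map is injective, so $\xi=0$. Therefore $\mathscr{L}_L$ is a local diffeomorphism, hence transverse to $N^*P$, and $TP^\perp=\mathscr{L}_L^{-1}(N^*P)$ is a submanifold of the open subset $A$ of $TM$ of codimension $n$, that is, of dimension $n$.

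The remaining assertions then follow by routine arguments. The restriction $\mathscr{L}_L|_{TP^\perp}\colon TP^\perp\to N^*P$ is again a local diffeomorphism onto an open subset of $N^*P$ (restriction of a local diffeomorphism to the preimage of a submanifold), and $\pi|_{TP^\perp}=\pi_{T^*M}\circ\mathscr{L}_L|_{TP^\perp}$; since $\pi_{T^*M}\colon N^*P\to P$ is a submersion, $\pi|_{TP^\perp}\colon TP^\perp\to P$ is a submersion, and therefore an open map, so its image $P_0$ is open in $P$ and $\pi\colon TP^\perp\to P_0$ is a surjective submersion. Finally, for $p\in P_0$ the fibre $T_pP^\perp=(\pi|_{TP^\perp})^{-1}(p)$ is a submanifold of $TP^\perp$, hence of $T_pM$, of dimension $n-r$; equivalently, $\mathscr{L}_L$ restricts on $A\cap T_pM$ to a local diffeomorphism onto an open subset of $T^*_pM$ taking $T_pP^\perp$ onto an open subset of the $(n-r)$-dimensional linear subspace $N^*_pP$ (and $T_pP^\perp=\emptyset$ whenever $p\in P\setminus P_0$).

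I expect the one genuinely computational point to be the identification of the fibrewise derivative of $\mathscr{L}_L$ with $u\mapsto g_v(u,\cdot)$ together with the appeal to nondegeneracy of the fundamental tensor; everything else is standard transversality and submersion bookkeeping. If an index-based alternative is preferred, the same facts are visible in natural coordinates $(x^i,y^i)$ with the $x^i$ adapted to $P$ (so that $P=\{x^{r+1}=\dots=x^n=0\}$): there $TP^\perp$ is the common zero set of $x^{r+1},\dots,x^n$ and $\partial L/\partial y^1,\dots,\partial L/\partial y^r$, and the submersion property reduces to the $r\times n$ matrix $\bigl(\partial^2L/\partial y^a\partial y^b\bigr)_{1\le a\le r,\ 1\le b\le n}$ having rank $r$ — which is again the nondegeneracy of $g_v$ restricted to one argument in $T_pP$.
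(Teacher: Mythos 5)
Your proof is correct, but it takes a genuinely different route from the paper's. The paper argues locally: it picks a local frame $E_1,\dots,E_r$ of $P$ over $\Xi\subset P$, restricts to $A\cap\pi^{-1}(\Xi)$ (dimension $n+r$), and shows that $v\mapsto(g_v(v,E_1),\dots,g_v(v,E_r))$ is a submersion onto $\R^r$, so that $TP^\perp$ is locally the preimage of $0$. You instead work globally, realising $TP^\perp$ as $\mathscr{L}_L^{-1}(N^*P)$ and proving that the Legendre transformation is a local diffeomorphism of $A$ onto its image in $T^*M$. The computational core is identical in both cases --- the fibre derivative of $v\mapsto g_v(v,\cdot)$ is $u\mapsto g_v(u,\cdot)$, which follows from $g_v(v,w)=\tfrac12\frac{\partial}{\partial z}L(v+zw)|_{z=0}$ (Proposition~\ref{fundamentalprop}) or, equivalently, from the vanishing $C_v(u,v,\cdot)=0$ of Remark~\ref{porvenir}, plus nondegeneracy of $g_v$ --- but the packaging differs. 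The paper's argument is more elementary and self-contained (no conormal bundle, no transversality); yours avoids the choice of local frame, makes the identification of $\pi|_{TP^\perp}$ with the conormal bundle projection transparent (so the submersion and openness claims come for free), and establishes as a by-product that $\mathscr{L}_L$ is always a local diffeomorphism for a pseudo-Finsler metric --- a fact the paper never proves but implicitly leans on when it assumes injectivity of $\mathscr{L}_L$ in Corollary~\ref{critgeo}. Your closing remark that $T_pP^\perp$ is carried by the fibrewise Legendre map onto an open subset of the linear space $N^*_pP$ also gives the cleanest justification that $T_pP^\perp$ is a submanifold of $T_pM$ (and not merely of $TM$), a point the paper passes over quickly.
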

\begin{proof}
  Assume  that $E_1,\ldots,E_r$ is a local frame field over an open subset $\Xi$ of $P$  , and observe that $A\cap \pi(\Xi)^{-1} $ is a submanifold of $TM$ of dimension $n+r$. Define $\varphi:A\cap\pi(\Xi)^{-1} \rightarrow \R^r$ as $\varphi(v)=(g_v(v,E_1),\ldots,g_v(v,E_r))$. %=(\mathscr{L}_L(v)[E_1],\ldots,\mathscr{L}_L(v)[E_r])$. % If $P_0$ is not empty, by definition, at every $p\in P_0$, there exists $v$ such that $\pi(v)=p$, such that $\varphi(v)=0$. Then as the Legendre transformation is a local diffeomorphism in the fibers, we deduce that the image of $\varphi$ contains an open subset of $0\in \R^r$. As $\varphi$ is homogeneous of degree 1, then it follows that the image of $\varphi|_{A\cap \pi^{-1}(p)}$ is $\R^r$. 
  % Let us recall that the Legendre transformation  has fiber derivative given by $D_f\mathscr{L}_L(v)[w]=g_v(u,\cdot)$ for every $u\in T_{\pi(v)}M$, which is an isomorphism at every $v\in A$. As a consequence, the Legendre transformation is a local diffeomorphism and 
Observe that if $h:(-\epsilon,\epsilon)\rightarrow T_pM$ is a curve such that $h(0)=v$ and $\dot h(0)=u$ and $w\in T_pM$,  then using the covariant derivative along the constant curve equal to $p$,  \eqref{almostmetric} and Remark \ref{porvenir}, we get
\[\frac{d}{dt}g_{h(t)}(h(t),w)|_{t=0}= g_v(u,w)+ 2 C_v(u,v,w)=g_v(u,w)\]
and then  the fiber derivative of $\varphi$ is given by
  $D_f\varphi_v(u)=(g_v(u,E_1),\ldots,g_v(u,E_r))$ for every $u\in T_{\pi(v)}M$. As $g_v$ is a non-degenerate  scalar product and $(E_1,\ldots,E_r)$ is linearly independent, this ensures that the map $\varphi$ is a submersion. Then \linebreak $TP^\perp\cap \pi^{-1}(\Xi)=\varphi^{-1}(0)$, which, if not empty, is an $n$-dimensional submanifold of $TM$, and $TP^\perp\cap T_pM$ is a submanifold of $T_pM$ of dimension $n-r$. This implies that $P_0=\pi(TP^\perp)$  is open in $P$ and that 
$\pi:TP^\perp\rightarrow P_0$ is a submersion.
 \end{proof}
We denote  by $\mathcal{F}(P)$ the space of smooth real functions on $P$,  by  $\mathfrak{X}(P)$ the $\mathcal{F}(P)$-module  space of 
smooth sections of the fiber bundle $TP$ over $P$ and by $\mathfrak{X}(P)^\perp$
the space of  smooth sections of $\pi:TP^\perp\rightarrow P_0$. %% and by ${\mathcal F}(P)$ the subset of smooth real functions on $P$. 
 Given $N\in \mathfrak{X}(P)^\perp$, we denote by $\mathfrak{X}(P)^\perp_N$ 
the subset of smooth sections $W$ of  $\pi:i^*(TM)\rightarrow P$ (where $i^*(TM)$  is the pull-back of $TM$ along the inclusion $i:P\rightarrow M$) 
such that, for every $p\in P$, $W(p)$ is $g_{N}$-orthogonal to $T_pP$. Observe that in particular $N\in \mathfrak{X}(P)^\perp_N$.  Then if $g_N|_{T_pP\times T_pP}$ is nondegenerate, we have the decomposition
\begin{equation}\label{decompos}
T_pM=T_pP\oplus (T_pP)^\perp_N,
\end{equation}
where $(T_pP)^\perp_N$ is the subspace of $T_pM$ consisting of $g_N$-orthogonal vectors to $T_pP$. Then for every smooth section $V$ of $\pi:TM\rightarrow P$, we can define ${\rm tan}_N(V)$ (resp. ${\rm nor}_N(V)$)  as the vector field in $\mathfrak{X}(P)$ obtained  by  projecting $V(p)$ to $T_pP$ (resp.  $(T_pP)^\perp_N$), for every $p\in P$,  through the decomposition \eqref{decompos}.
%% Observe that $\mathfrak{X}(N)$ is not contained in $\mathfrak{X}(P)^\perp$, because the vector fields in $\mathfrak{X}(N)$ can have zeroes.
\begin{defi}\label{secondfundamental}
Fix $N\in \mathfrak{X}(P)^\perp$ and suppose that $g_N|_{T_pP\times T_pP}$ is nondegenerate for every $p\in P$. Then
\begin{enumerate}[(i)]
\item the {\it second fundamental form} of $P$ in the direction of $N$ 
 is the map  \linebreak 
$S^P_N:\mathfrak{X}(P)\times \mathfrak{X}(P)\rightarrow  
\mathfrak{X}(P)^\perp_N$ given by $S_N^P(U,W)={\rm nor}_N \nabla^N_UW$,
\item the {\it normal second fundamental form}  $\tilde{S}^P_N:\mathfrak{X}(P)\rightarrow  \mathfrak{X}(P)$  is given by \linebreak
$\tilde{S}_N^P(U)={\rm tan}_N\nabla^N_UN$.
\end{enumerate}
\end{defi}
\begin{prop}
With the above notation,  $S^P_N$ is ${\mathcal F}(P)$-bilinear and symmetric 
and $\tilde{S}_N^P$ is ${\mathcal F}(P)$-linear. Moreover,
\begin{equation}\label{secondfund}
g_N(S_N^P(U,W),N)=-g_N(\tilde{S}_N^P(U),W)
\end{equation}
for every $U,W\in \mathfrak{X}(P)$.
\end{prop}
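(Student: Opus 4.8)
The plan is to reduce the whole statement to the two structural properties of the Chern connection $\nabla^N$ (torsion-freeness and almost $g$-compatibility), the vanishing of the Cartan tensor in the direction of $N$ (Remark~\ref{porvenir}), and the fact that ${\rm tan}_N$ and ${\rm nor}_N$ are pointwise linear projections coming from the splitting \eqref{decompos}. A preliminary observation is needed: although $N\in\mathfrak X(P)^\perp$ and $U,W\in\mathfrak X(P)$ are only defined along $P_0$, the expressions $\nabla^N_UW$ and $\nabla^N_UN$ make sense along $P_0$. Indeed, extending $N$ to an $L$-admissible vector field on an open neighbourhood $\Omega$ in $M$ of a point $p\in P_0$ and extending $U,W$ to vector fields on $\Omega$ that are tangent to $P$ along $P$, the value $(\nabla^N_UW)(p)$ depends only on $N(p)$, on $U(p)$, and on the restriction of $W$ to a curve in $P$ through $p$ with velocity $U(p)$; hence it is independent of the chosen extensions, and similarly for $\nabla^N_UN$.

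For the tensorial properties, recall that an affine connection is $\mathcal F$-linear in its lower argument and a derivation in the upper one. Since ${\rm nor}_N$ is $\mathcal F(P)$-linear and vanishes on $\mathfrak X(P)$, for $f\in\mathcal F(P)$ we obtain $S_N^P(fU,W)=f\,{\rm nor}_N\nabla^N_UW=fS_N^P(U,W)$ and $S_N^P(U,fW)={\rm nor}_N\big(f\nabla^N_UW+(Uf)W\big)=fS_N^P(U,W)$, using ${\rm nor}_N W=0$; thus $S_N^P$ is $\mathcal F(P)$-bilinear. For symmetry, torsion-freeness of $\nabla^N$ gives $S_N^P(U,W)-S_N^P(W,U)={\rm nor}_N\big(\nabla^N_UW-\nabla^N_WU\big)={\rm nor}_N[U,W]$, and the bracket of extensions tangent to $P$ is tangent to $P$ along $P$, so this is $0$. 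The $\mathcal F(P)$-linearity of $\tilde S_N^P$ follows the same way, using only $\mathcal F$-linearity of $\nabla^N$ in the lower slot: $\tilde S_N^P(fU)={\rm tan}_N(f\nabla^N_UN)=f\tilde S_N^P(U)$.

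To prove \eqref{secondfund}, apply the almost $g$-compatibility of $\nabla^N$ with $X=U$, $Y=W$, $Z=N$, and restrict to $P$. The function $p\mapsto g_{N(p)}(W(p),N(p))$ is identically zero on $P_0$, since $N(p)\in T_pP^\perp$ and $W(p)\in T_pP$, so its $U$-derivative vanishes; and $C_N(\nabla^N_UN,W,N)=0$ by Remark~\ref{porvenir}. Hence
\[
0=g_N(\nabla^N_UW,N)+g_N(W,\nabla^N_UN).
\]
Now split $\nabla^N_UW={\rm tan}_N\nabla^N_UW+S_N^P(U,W)$ via \eqref{decompos}; since ${\rm tan}_N\nabla^N_UW\in T_pP$ and $N(p)\in T_pP^\perp$, the first summand is $g_N$-orthogonal to $N$, so $g_N(\nabla^N_UW,N)=g_N(S_N^P(U,W),N)$. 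Likewise split $\nabla^N_UN=\tilde S_N^P(U)+{\rm nor}_N\nabla^N_UN$; since $W(p)\in T_pP$ and ${\rm nor}_N\nabla^N_UN\in(T_pP)^\perp_N$, the second summand is $g_N$-orthogonal to $W$, so $g_N(W,\nabla^N_UN)=g_N(\tilde S_N^P(U),W)$. Substituting these two identities yields \eqref{secondfund}. The only point requiring care is the preliminary observation that all covariant derivatives involving $N$ are well defined and independent of extensions; granting that, the rest is a routine bookkeeping with the defining properties of the Chern connection and Remark~\ref{porvenir}.
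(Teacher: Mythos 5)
Your proof is correct and follows essentially the same route as the paper: Leibniz rule plus ${\rm nor}_N W=0$ for bilinearity, torsion-freeness and tangency of $[U,W]$ for symmetry, and almost $g$-compatibility together with $g_N(N,W)=0$ and Remark~\ref{porvenir} for \eqref{secondfund}. The extra care you take about extensions and about discarding the tangential/normal components via the splitting \eqref{decompos} is sound but only makes explicit what the paper leaves implicit.
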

\begin{proof}
 First we show that $S^P_N$ is ${\mathcal F}(P)$-bilinear. This is 
immediate for the first variable. As to the second one, 
let $f\in{\mathcal F}(P)$ and $U,W\in\mathfrak{X}(P)$. Then
\[
S_N^P(U,fW)={\rm nor}_N\nabla^N_U(fW)={\rm nor}_N(U(f)W+f\nabla^N_U(W))=f\,{\rm nor}_N(\nabla^N_UW),
\]
since $W$ is tangent to $P$. For the symmetry, 
\[S_N^P(U,W)-S_N^P(W,U)={\rm nor}_N(\nabla^N_UW-\nabla^N_WU)={\rm nor}_N[U,W]=0,\]
since $[U,W]$ is tangent to $P$. Again it is straightforward to check that 
$\tilde{S}^P_N$ is \linebreak ${\mathcal F}(P)$-linear. For \eqref{secondfund}, using that the Chern connection is almost $g$-compatible, $g_N(N,W)=0$ and Remark \ref{porvenir}, we get
\begin{multline*}%\label{secondfund}
g_N(S_N^P(U,W),N)=g_N(\nabla^N_UW,N)\\=-g_N(W,\nabla^N_UN)-2C_N(\nabla^N_UN,W,N)=-g_N(\tilde{S}_N^P(U),W),
\end{multline*}
as required.
\end{proof}
\begin{rem}\label{homosecond}
Observe that from the homogeneity of the Chern connection it \linebreak follows 
that $S_{\lambda N}^P=S_N^P$, and then from \eqref{secondfund}, that $\tilde{S}_{\lambda N}^P=\lambda\tilde{S}_N^P$. Moreover, we shall interpret $S^P_N$ and $\tilde{S}_N^P$ as maps $S_N^P:T_pP\times T_pP\rightarrow (T_pP)_N^\perp$ and $\tilde{S}_N^P:T_pP\rightarrow T_pP$, respectively, even when $\mathfrak{X}(P)^\perp$ is empty (think that $P$ could be non-orientable), since if $T_pP^\perp$ is not empty, then there is some open subset $\Xi\subset P$ such that  $\mathfrak{X}(\Xi)^\perp$ is not empty.
\end{rem}
\subsection{The endmanifold case}\label{endmanifold}
Consider now the space of curves \[C_L(P,Q)\subset C_L(M,[a,b])\] joining two submanifolds $P$ and $Q$ of $M$, namely,
\[C_L(P,Q):=\{\gamma\in C_L(M,[a,b]): \gamma(a)\in P,\gamma(b)\in Q\}.\]
 When we consider a piecewise smooth $(P,Q)$-variation of $\gamma\in C_L(P,Q)$ 
by curves in $C_L(P,Q)$, the variational vector field is tangent to $P$ and 
$Q$ at the endpoints. Indeed, we define 
 \[T_\gamma C_L(P,Q)=\{W\in T_{\gamma}C_L(M,[a,b]): W(a)\in 
T_{\gamma(a)}P,W(b)\in T_{\gamma(b)}Q\}.\]
 Moreover, we say that $\gamma$ is a critical point of $E|_{C_L(P,Q)}$ if $dE_\gamma(W)=0$ for every $W\in T_\gamma C_L(P,Q)$.
\begin{cor}\label{critgeo}
Let $\gamma\in C_L(P,Q)$ and assume that the Legendre transformation $\mathscr{L}_L$ is injective. Then $\gamma$ is a critical point of the energy functional $E|_{C_L(P,Q)}$ if and only if $\gamma$ is a geodesic $g_{\dot\gamma}$-orthogonal to $P$ and $Q$.
\end{cor}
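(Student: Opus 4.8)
The plan is to apply the first variation formula from Proposition \ref{firstvar} to curves $\gamma \in C_L(P,Q)$ and to vector fields $W \in T_\gamma C_L(P,Q)$, and then to peel off the boundary and interior conditions one at a time by choosing suitable test vector fields. Recall that $dE_\gamma(W)$ decomposes into three pieces: the interior integral $-\int_a^b g_{\dot\gamma}(W, D^{\dot\gamma}_\gamma\dot\gamma)\,dt$, the endpoint term $[g_{\dot\gamma}(W,\dot\gamma)]_a^b$, and the sum over breaks $\sum_{i=1}^h\bigl(\mathscr{L}_L(\dot\gamma(t_i^+))(W(t_i)) - \mathscr{L}_L(\dot\gamma(t_i^-))(W(t_i))\bigr)$.

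For the ``if'' direction: if $\gamma$ is a geodesic (hence smooth, so there are no breaks and the sum vanishes) with $D^{\dot\gamma}_\gamma\dot\gamma = 0$, the interior integral is zero, and $g_{\dot\gamma(b)}(W(b),\dot\gamma(b)) = 0$ because $W(b)\in T_{\gamma(b)}Q$ and $\dot\gamma(b)$ is $g_{\dot\gamma}$-orthogonal to $Q$, and similarly at $a$; hence $dE_\gamma(W) = 0$ for every $W \in T_\gamma C_L(P,Q)$.

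For the ``only if'' direction I would argue in the standard order. First, choosing $W$ supported in the interior of a single smooth subinterval and vanishing at all breaks and endpoints, $dE_\gamma(W) = -\int g_{\dot\gamma}(W, D^{\dot\gamma}_\gamma\dot\gamma)\,dt$; since $g_{\dot\gamma}$ is nondegenerate and $W$ is arbitrary (note: one must check that an $L$-admissible such $W$ can be found locally near $\dot\gamma$, using that $A$ is open), a localization/bump-function argument forces $D^{\dot\gamma}_\gamma\dot\gamma = 0$ on each subinterval, so $\gamma$ is a broken geodesic. Next, choosing $W$ vanishing at $a$ and $b$ but with prescribed value $W(t_i)$ at an interior break, the only surviving term is $\mathscr{L}_L(\dot\gamma(t_i^+))(W(t_i)) - \mathscr{L}_L(\dot\gamma(t_i^-))(W(t_i)) = \bigl(\mathscr{L}_L(\dot\gamma(t_i^+)) - \mathscr{L}_L(\dot\gamma(t_i^-))\bigr)(W(t_i))$; letting $W(t_i)$ range over $T_{\gamma(t_i)}M$ gives $\mathscr{L}_L(\dot\gamma(t_i^+)) = \mathscr{L}_L(\dot\gamma(t_i^-))$, and injectivity of $\mathscr{L}_L$ yields $\dot\gamma(t_i^+) = \dot\gamma(t_i^-)$, so $\gamma$ is an (unbroken) smooth geodesic. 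Finally, with $\gamma$ now a geodesic and no breaks, $dE_\gamma(W) = g_{\dot\gamma(b)}(W(b),\dot\gamma(b)) - g_{\dot\gamma(a)}(W(a),\dot\gamma(a))$; choosing $W$ with $W(a) = 0$ and $W(b)$ an arbitrary vector in $T_{\gamma(b)}Q$ forces $g_{\dot\gamma(b)}(\cdot,\dot\gamma(b)) = 0$ on $T_{\gamma(b)}Q$, i.e. $\dot\gamma(b) \perp_{g_{\dot\gamma}} Q$, and symmetrically $\dot\gamma(a) \perp_{g_{\dot\gamma}} P$.

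The main obstacle is the admissibility bookkeeping: the test vector fields $W$ must be $L$-admissible, i.e. $W(t)\in A$, whereas the natural test fields in the semi-Riemannian argument (bump functions times a fixed direction, vanishing somewhere) are not. The standard fix, which I would make explicit, is to add a fixed $L$-admissible reference field (for instance a suitable multiple of $\dot\gamma$, or any extension of an admissible vector, which exists since $A$ is open and $[a,b]$ is compact) so that $W = W_0 + \varepsilon U$ stays in $A$ for small $\varepsilon$, and to exploit the linearity of $dE_\gamma$ together with the fact that $dE_\gamma(W_0)$ contributes a fixed quantity; alternatively one observes $dE_\gamma$ is the restriction of a linear functional defined on all piecewise smooth vector fields and the admissible ones form an open set spanning the whole space, so vanishing on the admissible ones forces vanishing identically. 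Once this technical point is dispatched, each of the three steps above is routine.
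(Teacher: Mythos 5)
Your proof is correct and follows essentially the same route as the paper's: localize with bump functions to kill $D^{\dot\gamma}_\gamma\dot\gamma$ on each smooth piece, use injectivity of $\mathscr{L}_L$ at the breaks, and then test with endpoint values in $T_{\gamma(a)}P$ and $T_{\gamma(b)}Q$ to get orthogonality, the converse being immediate from the formula. Your extra paragraph on $L$-admissibility of the test fields addresses a point the paper silently glosses over (its own test fields vanish somewhere, hence are not valued in $A$), and your resolution via linearity of $dE_\gamma$ and openness of $A$ is sound; the simpler observation is that only the variation $\Lambda$, not the variational field $W$, needs to be $L$-admissible, which holds for any $W$ after shrinking $\varepsilon$ as the paper itself notes.
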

\begin{proof}
Let $t_0\in (a,b)$ an instant where $\gamma$ is smooth.  As the scalar product $g_{\dot\gamma}$ is nondegenerate, if we assume that $D_\gamma^{\dot\gamma}\dot\gamma\not=0$, using bump functions, we can find  a vector field $W$ such
that 
$g_{\dot\gamma}(D_\gamma^{\dot\gamma}\dot\gamma,W)>0$ in a neighborhood of $t_0$ that does not contain breaks and is zero everywhere. Then using \eqref{firstvariation}, we get a contradiction. Thus $\gamma$ must be a piecewise geodesic.
Assume that  $\dot\gamma(t_i^+)\not=\dot\gamma(t_i^-)$ for some $i=1,\ldots,h$. As $\mathscr{L}_L$ is assumed to be injective, we can  find a variational vector field $W_i$ such that %\linebreak
$\mathscr{L}_L(\dot\gamma(t_i^+))(W_i)-\mathscr{L}_L(\dot\gamma(t_i^-))(W_i)\not=0$ and it is zero at the other breaks. This gives a contradiction in \eqref{firstvariation}, since $\gamma$ is a critical point. Therefore, $\gamma$ is a geodesic. Finally given $w\in T_{\gamma(a)}P$, construct a vector field $W$ such that $W(a)=w$ and $W(b)=0$. Then \eqref{firstvariation} implies that $g_{\dot\gamma(a)}(\dot\gamma(a),w)=0$. Analogously, we can show that for any $v\in T_{\gamma(b)}Q$, $g_{\dot\gamma(b)}(\dot\gamma(b),v)=0$.  The converse is trivial.
\end{proof}
\begin{cor}
Let $\gamma\in C_L(P,Q)$ be a geodesic of $(M,L)$ that is 
$g_{\dot\gamma}$-orthogonal to $P$ and $Q$ at the endpoints and such that 
$g_{\dot\gamma(a)}|_{P\times P}$ and $g_{\dot\gamma(b)}|_{Q\times Q}$ are nondegenerate. 
%Assume that the Legendre transformation $\mathscr{L}_L$ is injective and 
 Consider a smooth $L$-admissible $(P,Q)$-variation. Then 
\begin{multline*}
E''(0)=\int_a^b \left(-g_{\dot\gamma}(R^\gamma(\dot\gamma,W)W,\dot\gamma)+g_{\dot\gamma}(W',W')\right)dt\\+
g_{\dot\gamma(b)}(S^P_{\dot\gamma(b)}(W,W),\dot\gamma(b))
-g_{\dot\gamma(a)}(S^Q_{\dot\gamma(a)}(W,W),\dot\gamma(a)),
\end{multline*}
where $W$ is the variational vector field of the variation along $\gamma$.
\end{cor}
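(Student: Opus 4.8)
The plan is to start from the general second variation formula for a geodesic, Proposition~\ref{secondvar}, which applies verbatim here since $\gamma$ is a geodesic and $\Lambda$ is $L$-admissible: it produces exactly the interior integral $\int_a^b\bigl(-g_{\dot\gamma}(R^\gamma(\dot\gamma,W)W,\dot\gamma)+g_{\dot\gamma}(W',W')\bigr)\,dt$ that already occurs in the statement, together with the boundary term $\bigl[g_{\dot\gamma}(D^{\dot\gamma}_{\beta_t}\dot\beta_t|_{s=0},\dot\gamma)\bigr]_a^b$. Thus all the work is to evaluate this transverse-acceleration term at each of the two endpoints and identify it with a second fundamental form. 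This is exactly where the $(P,Q)$-variation hypothesis enters: the transverse curves $\beta_a(s)=\Lambda(a,s)$ and $\beta_b(s)=\Lambda(b,s)$ lie entirely in $P$ and in $Q$ respectively, so $\dot\beta_a$ and $\dot\beta_b$ are tangent to $P$ and to $Q$; and the $g_{\dot\gamma}$-orthogonality of $\dot\gamma$ to $P$, $Q$ together with the nondegeneracy of $g_{\dot\gamma(a)}|_{P\times P}$ and $g_{\dot\gamma(b)}|_{Q\times Q}$ is precisely what makes the machinery of subsection~\ref{submani} available.

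I would carry out the computation at $t=a$, the endpoint $t=b$ being identical with $Q$ in place of $P$. Since $\gamma(a)$ carries the normal vector $\dot\gamma(a)$, it lies in the open set $P_0$, so by the Lemma of subsection~\ref{submani} (and Remark~\ref{homosecond}) $\dot\gamma(a)$ extends to a smooth $g$-orthogonal normal field $N\in\mathfrak X(\Xi)^\perp$ over a neighbourhood $\Xi$ of $\gamma(a)$ in $P$; pull it back along $\beta_a$ to an $L$-admissible normal field $\tilde N$ along $\beta_a$ with $\tilde N(0)=\dot\gamma(a)$. The first small point is that the covariant derivative $D^V_{\beta_a}X$ evaluated at $s=0$ depends on the reference field $V$ only through $V(0)$, because in the local formula~\eqref{connection} the Christoffel symbols are evaluated along $V$; since $\dot\gamma_0(a)=\dot\gamma(a)=\tilde N(0)$, we may replace the reference field $\dot\gamma_s(a)$ coming from Proposition~\ref{secondvar} by $\tilde N$, i.e.\ $D^{\dot\gamma}_{\beta_a}\dot\beta_a|_{s=0}=D^{\tilde N}_{\beta_a}\dot\beta_a|_{s=0}$.

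Now $g_{\tilde N}(\dot\beta_a,\tilde N)\equiv 0$ by the very definition of the normal bundle $TP^\perp$ ($\dot\beta_a$ is tangent to $P$, $\tilde N$ is normal). Differentiating this in $s$ with the almost $g$-compatibility~\eqref{almostmetric} of the induced covariant derivative along $\beta_a$, and using that the Cartan term vanishes because $C_v(\,\cdot\,,\,\cdot\,,v)=0$ (Remark~\ref{porvenir}), one gets at $s=0$, with $\dot\beta_a(0)=\Lambda_s(a,0)=W(a)$, the identity $g_{\dot\gamma(a)}(D^{\dot\gamma}_{\beta_a}\dot\beta_a|_{s=0},\dot\gamma(a))=-g_{\dot\gamma(a)}(W(a),D^{\tilde N}_{\beta_a}\tilde N|_{s=0})$. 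Since $D^{\tilde N}_{\beta_a}\tilde N|_{s=0}$ is the covariant derivative of the normal field $\tilde N$ in the tangent direction $W(a)\in T_{\gamma(a)}P$, its $g_{\dot\gamma(a)}$-tangential part is $\tilde S^P_{\dot\gamma(a)}(W(a))$ by Definition~\ref{secondfundamental}(ii), and pairing with $W(a)$ through $g_{\dot\gamma(a)}$ kills the normal part (the decomposition~\eqref{decompos} holds because $g_{\dot\gamma(a)}|_{P\times P}$ is nondegenerate); hence the left-hand side equals $-g_{\dot\gamma(a)}(W(a),\tilde S^P_{\dot\gamma(a)}(W(a)))$, which by~\eqref{secondfund} with $U=W=W(a)$ is $g_{\dot\gamma(a)}(S^P_{\dot\gamma(a)}(W(a),W(a)),\dot\gamma(a))$. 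Running the same argument at $t=b$ gives the contribution $g_{\dot\gamma(b)}(S^Q_{\dot\gamma(b)}(W(b),W(b)),\dot\gamma(b))$ of the endpoint lying in $Q$, and substituting both into $\bigl[\,\cdot\,\bigr]_a^b=(\text{value at }b)-(\text{value at }a)$ in Proposition~\ref{secondvar} yields the claimed formula (the endpoint in $P$, resp.\ $Q$, contributing $S^P$, resp.\ $S^Q$).

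The only step needing genuine care — the main obstacle — is the bookkeeping of reference vector fields: one must justify that the transverse-acceleration term delivered by Proposition~\ref{secondvar}, whose reference is the (in general non-normal) field $\dot\gamma_s$, may legitimately be recomputed with the normal reference field $\tilde N$, and that a local $g$-orthogonal normal extension of $\dot\gamma(a)$ exists at all, so that $\tilde S^P_{\dot\gamma(a)}$ and $S^P_{\dot\gamma(a)}$ are defined. Both are settled by the two observations above (pointwise dependence of the covariant derivative on its reference field, and the Lemma of subsection~\ref{submani} together with Remark~\ref{homosecond}); everything else is the by-now familiar almost-$g$-compatibility manipulation from Section~\ref{variations}, with the Cartan contributions disappearing at each stage by Remark~\ref{porvenir}.
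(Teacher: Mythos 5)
Your argument is correct and follows the same route as the paper, whose proof consists of the single remark that the corollary is a straightforward consequence of Proposition \ref{secondvar} and Definition \ref{secondfundamental}; you have supplied exactly the details that make it straightforward, namely the pointwise dependence of $D^V_{\beta_t}$ on the reference field $V$ (visible in \eqref{connection}), the existence of a local normal section through $\dot\gamma(a)$, and the almost-$g$-compatibility computation that converts the transverse acceleration term into $-g_{\dot\gamma(a)}(\tilde S^P_{\dot\gamma(a)}(W(a)),W(a))$ and then, via \eqref{secondfund}, into the second fundamental form. Note that your (correct) conclusion attaches $S^P$ to the endpoint $a\in P$ and $S^Q$ to $b\in Q$, whereas the corollary as printed has the superscripts $P$ and $Q$ interchanged --- evidently a typo in the paper, repeated in the definition of the index form but implicitly corrected in the proof of Proposition \ref{kernelindex}.
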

\begin{proof}
It is a straightforward consequence of the definition of the second fundamental form in Definition \ref{secondfundamental} and \eqref{secondvariation}.
\end{proof}
\subsection{The index form}
When $\gamma$ is a geodesic of a pseudo-Finsler manifold $(M,L)$  such that
it is $g_{\dot\gamma}$-orthogonal to $P$ and $Q$ at the endpoints and such that $g_{\dot\gamma(a)}|_{P\times P}$ and $g_{\dot\gamma(b)}|_{Q\times Q}$ are nondegenerate, we can define
  the $(P,Q)$-index form  of $\gamma$ as
\begin{multline*}
I^{\gamma}_{P,Q}(V,W)=\int_a^b \left(-g_{\dot\gamma}(R^\gamma(\dot\gamma,V)W,\dot\gamma)+g_{\dot\gamma}(V',W')\right)dt\\
+g_{\dot\gamma(b)}(S^P_{\dot\gamma(b)}(V,W),\dot\gamma(b))
-g_{\dot\gamma(a)}(S^Q_{\dot\gamma(a)}(V,W),\dot\gamma(a)),
\end{multline*}
where $V,W\in T_\gamma C_L(P,Q)$.
\begin{rem}
Observe that when $P$ (resp. $Q$) is a hypersurface of $M$ and $\gamma$ is orthogonal to $P$ (resp. to $Q$),  the  nondegeneracy condition on $g_{\dot\gamma(a)}|_{P\times P}$  (resp. $g_{\dot\gamma(b)}|_{Q\times Q}$) is equivalent to $L(\dot\gamma(a))\not=0$ (resp. $L(\dot\gamma(b))\not=0$).
\end{rem}
Let us now give a useful property of the tensor $B^V$ defined in subsection \ref{curvature}.
% \bb(formula \eqref{BV})\eb. 
%Let us observe that  the tensor $B^V$, defined in subsection \ref{curvature} for any $L$-admissible vector field $V\in\mathfrak{X}(\Omega)$,  is well-defined along a curve $\gamma$ whenever the first component is $\dot\gamma$ (see \eqref{jacobiop}). Indeed, for $X,Y,Z\in\mathfrak{X}(\gamma)$, we will denote by
%$B^\gamma(\dot\gamma,X,Y,Z)$ the quantity obtained with any extension of $X,Y,Z$ and $\dot\gamma$.
\begin{lemma}\label{Bzero}
 Let $(M,L)$ be a pseudo-Finsler manifold  %começo da mudança
and $\Omega$ some open subset of $M$  and $V$ an $L$-admissible vector field 
in $\Omega$.  Then for every $X,Y,Z,W \in\mathfrak{X}(\Omega)$, 
%is such that  $(\nabla^V_VV)(p)=\lambda V(p)$ for some arbitrary $\lambda\in \R$ in some point $p\in \Omega\subset M$,  then 
$B^V$ satisfies
\begin{enumerate}[(i)]
\item If $Z=V$ or $W=V$, then
\[  B^V(X,Y,Z,W) = 0.  \]
\item If at least two of 
the vector fields $X,Y,Z,W$ are equal to $V$, then
\[ B^V(X,Y,Z,W) = 0. \]
%B^V(V,X,Y,Z) = C_{V}(\nabla_{[V,X]}^{V}V,Y,Z)
\item For any curve $\gamma$ and $U,P\in \mathfrak{X}(\gamma)$, 
\[ g_{\dot\gamma}(R^\gamma(\dot\gamma,U)\dot\gamma,P) = 
-g_{\dot\gamma}(R^\gamma(\dot\gamma,U)P,\dot\gamma) \]
\end{enumerate}
\end{lemma}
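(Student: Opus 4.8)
The plan is to reduce everything to the defining identity \eqref{formulilla} together with the symmetry $C_v(v,\cdot,\cdot)=0$ from Remark \ref{porvenir}, and then to feed the resulting simplifications of $B^V$ into the curvature symmetries (ii) and (iv). For part (i), I would simply set $Z=V$ in the definition
\[
B^V(X,Y,Z,W)=\nabla^V_YC_V(\nabla_X^VV,Z,W)-\nabla^V_XC_V(\nabla_Y^VV,Z,W)+C_V(R^V(Y,X)V,Z,W),
\]
and evaluate the three terms. The last term vanishes immediately because $C_V(\cdot,V,\cdot)=0$ (Remark \ref{porvenir}). For the first two, I would use \eqref{formulilla}, which gives $\nabla^V_YC_V(\nabla^V_XV,V,W)=-C_V(\nabla^V_Y V,\nabla^V_XV,W)$ and likewise with $X,Y$ swapped; since $C_V$ is symmetric these two contributions cancel, so $B^V(X,Y,V,W)=0$. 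The case $W=V$ is identical by the symmetry of $C_V$ in its last two slots. Actually one should be a little careful here: $\nabla^V_YC_V(\nabla^V_XV,Z,W)$ with $Z=V$ is an evaluation of the $(0,4)$-tensor $\nabla^V_YC_V$ on the slot that is $V$; \eqref{formulilla} is exactly the statement $\nabla^V_XC_V(V,Z,W)=-C_V(\nabla^V_XV,Z,W)$, so applied with the roles suitably relabelled this is precisely what is needed.

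For part (ii), suppose two of $X,Y,Z,W$ equal $V$. If either of the two equal slots is $Z$ or $W$, part (i) already gives $B^V=0$, so the only remaining case is $X=Y=V$, in which case $\nabla^V_XV=\nabla^V_YV=\nabla^V_VV$ and $R^V(Y,X)V=R^V(V,V)V=0$ by antisymmetry of $R^V$; the first two terms of $B^V$ then coincide and cancel, and the third vanishes. Hence $B^V(V,V,Z,W)=0$, completing (ii).

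For part (iii), the strategy is to localize: near a point where $\dot\gamma\in A$, extend $\dot\gamma$ to an $L$-admissible vector field $V$ on a neighborhood $\Omega$ and extend $U,P$ to vector fields on $\Omega$, so that along $\gamma$ one has $g_{\dot\gamma}(R^\gamma(\dot\gamma,U)\dot\gamma,P)=g_V(R^V(V,U)V,P)$ (using that $R^\gamma(\dot\gamma,W)\dot\gamma=R_{\dot\gamma}(\dot\gamma,W)\dot\gamma$ matches the Chern curvature / curvature of $\nabla^V$ along $\gamma$, as recalled after \eqref{jacobiop}). Then apply symmetry (ii) with $X=V$, $Y=U$, $Z=V$, $W=P$:
\[
g_V(R^V(V,U)V,P)+g_V(R^V(V,U)P,V)=2B^V(V,U,V,P),
\]
and by part (i) (with $Z=V$) the right-hand side is zero. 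This yields exactly $g_{\dot\gamma}(R^\gamma(\dot\gamma,U)\dot\gamma,P)=-g_{\dot\gamma}(R^\gamma(\dot\gamma,U)P,\dot\gamma)$ along $\gamma$, and since the point was arbitrary the identity holds everywhere. The main obstacle I anticipate is purely bookkeeping in part (iii): one must check that the terms $R^\gamma(\dot\gamma,U)\dot\gamma$ and $R^\gamma(\dot\gamma,U)P$ along $\gamma$ genuinely agree with the corresponding $\nabla^V$-curvature expressions, since $R^\gamma(\dot\gamma,U)Z$ is defined via variations and is only tensorial in $U$ when $Z=\dot\gamma$ (by \cite[Corollary 1.3]{J14}); for the second term one needs that evaluating $R^V(V,U)Z$ at points of $\gamma$ depends only on the values of $V,U,Z$ along $\gamma$, which follows from tensoriality of the curvature of the affine connection $\nabla^V$ in all three arguments together with the fact that $R^\gamma(\dot\gamma,U)Z=R^\Lambda(\tilde Z)$ is independent of the chosen extension.
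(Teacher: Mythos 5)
Your proof is correct and follows essentially the same route as the paper: parts (i) and (ii) are verbatim the paper's computation (the paper handles the $X=Y=V$ case via antisymmetry of $B^V$ in its first two slots, which is equivalent to your direct cancellation of the first two terms plus $R^V(V,V)=0$), and for (iii) the paper simply defers to \cite[Eq.\ (14)]{J14}, whose proof is exactly the extension-plus-curvature-symmetry-(ii) argument you supply. The only small inaccuracy is your parenthetical appeal to $R^\gamma(\dot\gamma,W)\dot\gamma=R_{\dot\gamma}(\dot\gamma,W)\dot\gamma$, which the paper states only for geodesics; it is not needed, since the identification of $R^\gamma(\dot\gamma,U)Z$ with $R^V(V,U)\tilde Z$ for commuting extensions (via \cite[Theorem 2.2]{J14}) already suffices for an arbitrary $L$-admissible curve.
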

\begin{proof}
  %Observe that  $B^V(V,V,Y,Z)=0$ because  $B^V$ is anti-symmetric in the first two variables.
Since $B^V$ is symmetric in the last two arguments, to prove $(i)$ 
it is enough to show that $B^V(X,Y,V,W) = 0$.

 Using the definition of $B^V$ and Remark \ref{porvenir}, we get
\[
B^V(X,Y,V,W)=\nabla^V_YC_{V}\left(\nabla^V_XV,V,W\right)
-\nabla^V_XC_{V}\left(\nabla_Y^VV,V,W\right).
\]
Then, $(i)$ follows from the symmetry of the tensors 
$\nabla^V_YC_{V}$ and $\nabla^V_XC_{V}$ together with relation 
\eqref{formulilla}, because 
\begin{multline*}
B^V(X,Y,V,W) = \nabla^V_YC_{V}\left(V,\nabla^V_XV,W\right)
-\nabla^V_XC_{V}\left(V,\nabla_Y^VV,W\right) = \\
-C_{V}\left(\nabla^V_YV,\nabla^V_XV,W\right)+ 
C_{V}\left(\nabla^V_XV,\nabla_Y^VV,W\right)= 0~.
\end{multline*}
Now, $(ii)$ is a simple consequence of $(i)$ and the antisymmetry of 
$B^V$ in the first two arguments.

 $(iii)$, on its turn, is simply a generalization of \cite[Eq. (14)]{J14}, 
stating that it is valid for any curve $\gamma$, not only for geodesics. 
The proof is exactly the same. 

\end{proof}

\begin{prop}\label{kernelindex}
The kernel of $I^{\gamma}_{P,Q}$ consists of the vector fields \linebreak 
$V\in T_\gamma C_L(P,Q)$ satisfying 
\[V''=R^\gamma(\dot\gamma,V)\dot\gamma \quad,\quad
{\rm tan}_{\dot\gamma} V'(a)=\tilde{S}^P_{\dot\gamma(a)}(V(a)) ~~\mbox{and}~~
{\rm tan}_{\dot\gamma} V'(b)=\tilde{S}^Q_{\dot\gamma(b)}(V(b))~.
\]
\end{prop}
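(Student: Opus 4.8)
The strategy is the standard one from semi-Riemannian geometry (cf. \cite[Chapter 10]{oneill}), adapted to the almost $g$-compatible setting: integrate by parts in the integral term of $I^\gamma_{P,Q}$ to isolate pointwise conditions, and handle the boundary terms with the second fundamental form and Lemma~\ref{Bzero}. First I would take $V,W\in T_\gamma C_L(P,Q)$ with $\gamma$ smooth (the piecewise case follows by an extra sum of jump terms, which vanish because $V,W$ are continuous and $\gamma$ is a smooth geodesic), and rewrite
\[
\int_a^b g_{\dot\gamma}(V',W')\,dt = \left[g_{\dot\gamma}(V',W)\right]_a^b - \int_a^b g_{\dot\gamma}(V'',W)\,dt,
\]
using the almost $g$-compatibility \eqref{almostmetric} together with Remark~\ref{porvenir} (the Cartan term drops since $D^{\dot\gamma}_\gamma\dot\gamma=0$). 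Then part (iii) of Lemma~\ref{Bzero} lets me replace $g_{\dot\gamma}(R^\gamma(\dot\gamma,V)W,\dot\gamma)$ by $-g_{\dot\gamma}(R^\gamma(\dot\gamma,V)\dot\gamma,W)$, so that the integrand becomes $g_{\dot\gamma}(R^\gamma(\dot\gamma,V)\dot\gamma - V'',W)$ — note $R^\gamma(\dot\gamma,V)\dot\gamma$ is tensorial in $V$ by \cite[Corollary 1.3]{J14}, so this expression makes sense pointwise.

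**The pointwise equation.** With this rewriting,
\[
I^\gamma_{P,Q}(V,W) = \int_a^b g_{\dot\gamma}\big(R^\gamma(\dot\gamma,V)\dot\gamma - V'',W\big)\,dt + \text{(boundary terms)}.
\]
To extract $V'' = R^\gamma(\dot\gamma,V)\dot\gamma$, I would first restrict to $W$ with $W(a)=W(b)=0$ (so all boundary terms vanish) and run the usual fundamental-lemma-of-the-calculus-of-variations argument: since $g_{\dot\gamma}$ is nondegenerate, if $R^\gamma(\dot\gamma,V)\dot\gamma - V''$ were nonzero at some interior point, a bump-function $W$ supported near that point would make the integral nonzero, contradicting $V\in\ker I^\gamma_{P,Q}$. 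This gives the equation $V''=R^\gamma(\dot\gamma,V)\dot\gamma$ on $(a,b)$, hence on $[a,b]$ by continuity.

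**The boundary conditions.** Now the integral term is gone for all $W$, so $I^\gamma_{P,Q}(V,W)$ equals exactly the boundary contribution, which after the integration by parts reads
\[
\big[g_{\dot\gamma}(V',W)\big]_a^b + g_{\dot\gamma(b)}(S^P_{\dot\gamma(b)}(V,W),\dot\gamma(b)) - g_{\dot\gamma(a)}(S^Q_{\dot\gamma(a)}(V,W),\dot\gamma(a)).
\]
At $t=b$, $W(b)\in T_{\gamma(b)}Q$, and by \eqref{secondfund} applied to $Q$ in the direction $\dot\gamma(b)$ (which is $g_{\dot\gamma(b)}$-orthogonal to $Q$, and $g_{\dot\gamma(b)}|_{Q\times Q}$ is nondegenerate), $g_{\dot\gamma(b)}(S^P_{\dot\gamma(b)}(V,W),\dot\gamma(b))$ — wait, I must be careful with which submanifold goes with which endpoint: matching the sign convention in the definition of $I^\gamma_{P,Q}$, the $t=b$ term pairs $S^P_{\dot\gamma(b)}$ and $-\tilde S^P$. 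Using \eqref{secondfund}, $g_{\dot\gamma(b)}(S^P_{\dot\gamma(b)}(V(b),W(b)),\dot\gamma(b)) = -g_{\dot\gamma(b)}(\tilde S^P_{\dot\gamma(b)}(V(b)),W(b))$, and combining with $g_{\dot\gamma(b)}(V'(b),W(b))$ we get $g_{\dot\gamma(b)}(V'(b)-\tilde S^P_{\dot\gamma(b)}(V(b)),W(b))$; since $W(b)$ ranges over all of $T_{\gamma(b)}Q$ and $g_{\dot\gamma(b)}|_{Q\times Q}$ is nondegenerate, this vanishes for all such $W$ iff $\mathrm{tan}_{\dot\gamma}\big(V'(b)-\tilde S^P_{\dot\gamma(b)}(V(b))\big)=0$, i.e. $\mathrm{tan}_{\dot\gamma}V'(b)=\tilde S^Q_{\dot\gamma(b)}(V(b))$ (with the labelling as in the statement). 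The $t=a$ endpoint is identical up to sign. Conversely, if $V$ satisfies the three displayed conditions, reversing all the steps shows $I^\gamma_{P,Q}(V,W)=0$ for every $W\in T_\gamma C_L(P,Q)$.

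**Main obstacle.** The only genuinely delicate point is the repeated use of almost $g$-compatibility in place of metric compatibility: one must consistently check that every Cartan-tensor correction term contains a factor $D^{\dot\gamma}_\gamma\dot\gamma$ (hence vanishes since $\gamma$ is a geodesic) or an argument equal to $\dot\gamma$ (hence vanishes by Remark~\ref{porvenir}); this is exactly what makes the integration by parts and the identity $g_{\dot\gamma}(S^P_N(U,W),N)=-g_{\dot\gamma}(\tilde S^P_N(U),W)$ work cleanly. A secondary bookkeeping issue is keeping the $P$/$Q$ and $a$/$b$ pairings and signs straight, but that is mechanical once the structure is in place.
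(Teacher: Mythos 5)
Your proposal is correct and follows essentially the same route as the paper's proof: flip the curvature term via Lemma \ref{Bzero}(iii), integrate by parts using almost $g$-compatibility (with the Cartan terms killed by the geodesic equation and Remark \ref{porvenir}), extract the Jacobi equation by the fundamental lemma, and convert the boundary terms with \eqref{secondfund} and the nondegeneracy of $g_{\dot\gamma}$ restricted to $T P$ and $T Q$. The $P$/$Q$ versus $a$/$b$ pairing you flag is indeed an inconsistency in the paper's own notation, and your resolution (matching the labelling of the statement) is the intended one.
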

\begin{proof}
  Using the fact established in the above lemma, that \linebreak
$g_{\dot\gamma}(R^\gamma(\dot\gamma,V)W,\dot\gamma)=
-g_{\dot\gamma}(R^\gamma(\dot\gamma,V)\dot\gamma,W)$, and taking into account 
that, since the Chern connection is almost $g$-compatible and $\gamma$ is a 
geodesic,
\begin{equation*}%\label{produto}
g_{\dot\gamma}(V',W')=
\frac{d}{dt}(g_{\dot\gamma}(V',W))-g_{\dot\gamma}(V'',W),
\end{equation*}
we may conclude that the index form can also be expressed as
\begin{multline*}
I^{\gamma}_{P,Q}(V,W)=\int_a^b \left(g_{\dot\gamma}(R^\gamma(\dot\gamma,V)\dot\gamma,W)-g_{\dot\gamma}(V'',W)\right)ds
+\left[g_{\dot\gamma}(V',W)\right]_a^b\\+g_{\dot\gamma(b)}(S^P_{\dot\gamma(b)}(V,W),\dot\gamma(b))-g_{\dot\gamma(a)}(S^Q_{\dot\gamma(a)}(V,W),\dot\gamma(a)).
\end{multline*}

This means that $V\in T_\gamma C_L(P,Q)$ belongs to the kernel of the index 
form if and only if 
$V''-R^\gamma(\dot\gamma,V)\dot\gamma=0$ and 
\begin{align*}
g_{\dot\gamma(a)}(V'(a),W(a))+g_{\dot\gamma(a)}(S^P_{\dot\gamma(a)}(V,W),\dot\gamma(a))&=0,\\
g_{\dot\gamma(b)}(V'(b),W(b))+g_{\dot\gamma(b)}(S^Q_{\dot\gamma(b)}(V,W),\dot\gamma(b))&=0,
\end{align*}
at the endpoints. 
Using \eqref{secondfund}, we get that 
\begin{equation*}
g_{\dot\gamma(a)}(V'(a)-\tilde{S}^P_{\dot\gamma(a)}(V(a)),u)=0,\quad
g_{\dot\gamma(b)}(V'(b)-\tilde{S}^P_{\dot\gamma(b)}(V(b)),w)=0
\end{equation*}
for every $u\in T_{\gamma(a)}P$ and  $w\in T_{\gamma(b)}Q$.  Thus ${\rm tan}_{\dot\gamma} V'(a)=\tilde{S}^P_{\dot\gamma(a)}(V(a))$ and \linebreak
${\rm tan}_{\dot\gamma} V'(b)=\tilde{S}^Q_{\dot\gamma(b)}(V(b))$ as required.
\end{proof}
\subsection{Jacobi fields, Conjugate and focal points.}\label{jacobea}
\begin{defi}
Given a geodesic $\gamma:[a,b]\rightarrow M$ of $(M,L)$, 
a {\it Jacobi field} of $\gamma$ is a vector field $J$ along $\gamma$ 
satisfying
\[J''=
R^\gamma(\dot\gamma,J)\dot\gamma.\]
Moreover, given a submanifold $P$ such that $\gamma(a)\in P$ and $\dot\gamma(a)$ is $g_{\dot\gamma(a)}$-orthogonal to $P$, we say that a Jacobi field is {\it $P$-Jacobi} if 
$J(a)$ is tangent to $P$ and ${\rm tan}_{\dot\gamma} J'(a)=\tilde{S}^P_{\dot\gamma(a)}(J(a))$. An instant $t_0\in (a,b]$ is  called
\begin{enumerate}[(i)]
\item  {\it conjugate} if there exists a Jacobi field $J$ along $\gamma$ such that $J(a)=J(t_0)=0$,
\item  $P$-{\it focal} if there exists a $P$-Jacobi field $J$ such that $J(t_0)=0$.
\end{enumerate}
\end{defi}
Given a geodesic $\gamma:[a,b]\rightarrow M$ of a pseudo-Finsler manifold, we 
can construct a parallel orthonormal frame field along $\gamma$. Indeed, 
 fix an orthonormal basis $e_1,e_2,\ldots,e_n$ of 
$(T_{\gamma(a)}M,g_{\dot\gamma(a)})$, namely, a basis satisfying 
$g_{\dot\gamma(a)}(e_i,e_j)=\varepsilon_i \delta_{ij}$, where $\varepsilon_i^2=1$, 
$\delta_{ij}$ is the Kronecker's delta and $i,j=1,\ldots,n$. Then define the 
parallel vector fields $E_1,E_2,\ldots,E_n$ along $\gamma$ such that $E_i(a)=e_i$ for every $i=1,\ldots,n$. The fact that $\gamma$ is a geodesic implies 
that $g_{\dot\gamma}(E_i,E_j)=\varepsilon_i \delta_{ij}$, since
\[(g_{\dot\gamma}(E_i,E_j))'=g_{\dot\gamma}(E'_i,E_j)+g_{\dot\gamma}(E_i,E'_j)+
2 C_{\dot\gamma}(D_\gamma^{\dot\gamma}\dot\gamma,E_i,E_j)=0,\]
where we have used that the Chern connection is almost $g$-compatible, $\gamma$ is a geodesic and the frame field is parallel along $\gamma$. 

 We recall that a variation is called {\it geodesic} if its longitudinal curves are geodesics. As in the pseudo-Riemannian case, we have
\begin{prop}\label{JacobVari}
 In any pseudo-Finsler manifold, the variation vector field of a geodesic variation is a Jacobi field.
\end{prop}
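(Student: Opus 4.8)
The plan is to start from a geodesic variation $\Lambda:[a,b]\times(-\varepsilon,\varepsilon)\to M$, meaning each longitudinal curve $\gamma_s$ is a geodesic, so that $D^{\dot\gamma_s}_{\gamma_s}\dot\gamma_s=0$ for all $s$; write $J(t)=W(t)=\Lambda_s(t,0)$ for the variational vector field and show $J''=R^\gamma(\dot\gamma,J)\dot\gamma$. The key move is to differentiate the geodesic equation $D^{\dot\gamma_s}_{\gamma_s}\dot\gamma_s=0$ in the transverse direction, i.e.\ apply $D^{\dot\gamma_s}_{\beta_t}$ and then use the commutation property \eqref{commut}, $D^{\dot\gamma_s}_{\gamma_s}\dot\beta_t=D^{\dot\gamma_s}_{\beta_t}\dot\gamma_s$, to turn $D^{\dot\gamma_s}_{\beta_t}(D^{\dot\gamma_s}_{\gamma_s}\dot\gamma_s)$ into something involving $D^{\dot\gamma_s}_{\gamma_s}D^{\dot\gamma_s}_{\gamma_s}\dot\beta_t$ plus a curvature term coming from \eqref{jacobiop}, namely $D^{\dot\gamma_s}_{\beta_t}D^{\dot\gamma_s}_{\gamma_s}Z-D^{\dot\gamma_s}_{\gamma_s}D^{\dot\gamma_s}_{\beta_t}Z=R^{\gamma_s}(\dot\gamma_s,\dot\beta_t)Z$.

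Concretely, first I would observe
\[
0=D^{\dot\gamma_s}_{\beta_t}\big(D^{\dot\gamma_s}_{\gamma_s}\dot\gamma_s\big)
= D^{\dot\gamma_s}_{\gamma_s}\big(D^{\dot\gamma_s}_{\beta_t}\dot\gamma_s\big)+R^{\gamma_s}(\dot\gamma_s,\dot\beta_t)\dot\gamma_s,
\]
where the reordering uses the definition of $R^{\gamma_s}$ as in \eqref{jacobiop}. Then, using \eqref{commut} to replace $D^{\dot\gamma_s}_{\beta_t}\dot\gamma_s$ by $D^{\dot\gamma_s}_{\gamma_s}\dot\beta_t$, this becomes
\[
D^{\dot\gamma_s}_{\gamma_s}D^{\dot\gamma_s}_{\gamma_s}\dot\beta_t=-R^{\gamma_s}(\dot\gamma_s,\dot\beta_t)\dot\gamma_s.
\]
Finally I would restrict to $s=0$. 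Along $\gamma_0=\gamma$ the operator $D^{\dot\gamma_0}_{\gamma_0}$ is exactly $D^{\dot\gamma}_\gamma$, i.e.\ the prime, so the left-hand side is $J''$ with $J=\dot\beta_t|_{s=0}=W$; and by the very definition \eqref{jacobiop} of the Jacobi operator $R^\gamma(\dot\gamma,W)\dot\gamma=R^\Lambda(\tilde{\dot\gamma})$ applied to this variation, the right-hand side at $s=0$ is $-R^\gamma(\dot\gamma,W)\dot\gamma$ (using antisymmetry $R^\gamma(\dot\gamma,W)\dot\gamma=-R^\gamma(\dot\gamma,\dot\gamma)W$? no — rather directly $R^{\gamma_s}(\dot\gamma_s,\dot\beta_t)\dot\gamma_s$ specializes at $s=0$ to $R^\gamma(\dot\gamma,W)\dot\gamma$ via \eqref{jacobiop}). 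Hence $J''=R^\gamma(\dot\gamma,J)\dot\gamma$, which is the Jacobi equation.

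The main obstacle, and the point that needs care rather than cleverness, is bookkeeping with the reference vector field in the covariant derivatives: all derivatives above are taken with reference field $\dot\gamma_s=\Lambda_t$, so the identities \eqref{commut} and the curvature-operator identity \eqref{jacobiop} apply verbatim, but one must check that differentiating the geodesic relation $D^{\dot\gamma_s}_{\gamma_s}\dot\gamma_s=0$ with $D^{\dot\gamma_s}_{\beta_t}$ is legitimate even though the reference field $\dot\gamma_s$ depends on $s$ — this is fine because for a geodesic variation the relation holds identically in $(t,s)$, so it may be differentiated in the $s$-direction, and the resulting reordering is precisely governed by $R^{\gamma_s}$ as defined via a variation of $\Lambda$ itself. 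I would also remark, as is standard, that the statement is independent of the choice of geodesic variation realizing $J$, since the Jacobi equation is a linear second-order ODE determined by $J(a)$ and $J'(a)=D^{\dot\gamma}_\gamma J(a)$, so that every Jacobi field arises this way.
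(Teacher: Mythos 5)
Your strategy is the standard and correct one, and it is worth noting that the paper itself offers no proof of this proposition at all: it simply defers to the spray-manifold literature (\cite[Section 8.2.2]{SLK}). So your argument -- differentiate the geodesic relation $D^{\dot\gamma_s}_{\gamma_s}\dot\gamma_s=0$ in the transverse direction, reorder the two covariant derivatives via the curvature operator, and use torsion-freeness \eqref{commut} to convert $D^{\dot\gamma_s}_{\beta_t}\dot\gamma_s$ into $D^{\dot\gamma_s}_{\gamma_s}\dot\beta_t$ -- supplies exactly the computation the paper omits, and it is legitimate in this setting because all derivatives carry the same reference field $\Lambda_t=\dot\gamma_s$ and because \eqref{jacobiop} is independent of the variation and the extension, with $R^\gamma(\dot\gamma,W)\dot\gamma$ tensorial in $W$ as the paper records.

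The one thing to fix is a sign inconsistency in the middle of your write-up. The paper's definition is
\[
R^\Lambda(Z)=D_{\gamma_{s}}^{\Lambda_t}D_{\beta_{t}}^{\Lambda_t}Z-D_{\beta_{t}}^{\Lambda_t}D_{\gamma_{s}}^{\Lambda_t}Z,
\]
which is the \emph{negative} of the commutator identity you state ($D^{\dot\gamma_s}_{\beta_t}D^{\dot\gamma_s}_{\gamma_s}Z-D^{\dot\gamma_s}_{\gamma_s}D^{\dot\gamma_s}_{\beta_t}Z=R^{\gamma_s}(\dot\gamma_s,\dot\beta_t)Z$). With the paper's convention the chain is cleaner and has no minus sign anywhere:
\[
0=D^{\dot\gamma_s}_{\beta_t}\bigl(D^{\dot\gamma_s}_{\gamma_s}\dot\gamma_s\bigr)
=D^{\dot\gamma_s}_{\gamma_s}\bigl(D^{\dot\gamma_s}_{\beta_t}\dot\gamma_s\bigr)-R^{\Lambda}(\dot\gamma_s)
=D^{\dot\gamma_s}_{\gamma_s}D^{\dot\gamma_s}_{\gamma_s}\dot\beta_t-R^{\Lambda}(\dot\gamma_s),
\]
so that at $s=0$ one reads off $J''=R^\gamma(\dot\gamma,J)\dot\gamma$ directly. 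As written, your intermediate display $D^{\dot\gamma_s}_{\gamma_s}D^{\dot\gamma_s}_{\gamma_s}\dot\beta_t=-R^{\gamma_s}(\dot\gamma_s,\dot\beta_t)\dot\gamma_s$ contradicts your (correct) final conclusion if $R^{\gamma_s}$ is the paper's operator; the two sign slips cancel, but the bookkeeping should be made consistent with \eqref{jacobiop}. Your closing remark that every Jacobi field is realized by some geodesic variation is not needed for the proposition, though it is true and is the natural converse.
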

We note that this is true also in the more general setting of spray manifolds, see \cite[Section 8.2.2]{SLK}. 
%% We say that a variation is {\it geodesic} when it is given by geodesics. 
%% \begin{prop}\label{JacobVari}
%% Given a geodesic $\gamma$ of $(M,L)$, the vector field $J$ along $\gamma$ of a geodesic variation  is a Jacobi field.
%% \end{prop}
%% \begin{proof}
%% Using \eqref{commut}
%%  and \eqref{jacobiop} and that $\gamma_s$ is a geodesic, we get
%% \[J''=
%% \left.D_{\gamma_s}^{\dot\gamma_s}
%% D_{\gamma_s}^{\dot\gamma_s}\dot\beta_t\right|_{s=0}=\left.D_{\gamma_s}^{\dot\gamma_s}
%% D_{\beta_t}^{\dot\gamma_s}\dot\gamma_s\right|_{s=0}=R^\gamma(\dot\gamma,J)\dot\gamma,
%% \]
%% as required.
%% \end{proof}
\begin{lemma}\label{geodesicvariations}
Let $\gamma:[a,b]\rightarrow M$ be a geodesic of $(M,L)$ with $\gamma(a)=p$. Then for any $v,w\in T_pM$, there exists a unique Jacobi field such that $J(a)=v$ and $ J'(a)=w$.
\end{lemma}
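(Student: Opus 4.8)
The plan is to reduce the Jacobi equation to a linear second-order system of ODEs with smooth coefficients on $[a,b]$ and then invoke the standard existence and uniqueness theorem. First I would fix a parallel orthonormal frame field $E_1,\dots,E_n$ along $\gamma$ with $E_i(a)=e_i$, exactly as constructed in the paragraph preceding the statement, and write $v=\sum_{i=1}^n v^i e_i$ and $w=\sum_{i=1}^n w^i e_i$. Every $J\in\mathfrak X(\gamma)$ decomposes uniquely as $J=\sum_{i=1}^n J^i E_i$ with $J^i\in C^\infty([a,b])$, and since the frame is parallel we have $J'=\sum_i \dot J^i E_i$ and $J''=\sum_i \ddot J^i E_i$.

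Next I would use that $\gamma$ is a geodesic, so that $R^\gamma(\dot\gamma,\cdot)\dot\gamma$ agrees with $W\mapsto R_{\dot\gamma}(\dot\gamma,W)\dot\gamma$, which is $C^\infty([a,b])$-linear in $W$; this is precisely the tensoriality noted right after \eqref{jacobiop} (a consequence of \cite[Corollary 1.3]{J14}). Hence $R^\gamma(\dot\gamma,\cdot)\dot\gamma$ defines a smooth endomorphism field along $\gamma$, and writing $R^\gamma(\dot\gamma,E_j)\dot\gamma=\sum_{i=1}^n a^i_j E_i$ with $a^i_j\in C^\infty([a,b])$, the Jacobi equation $J''=R^\gamma(\dot\gamma,J)\dot\gamma$ is equivalent to the linear system
\[
\ddot J^i=\sum_{j=1}^n a^i_j\, J^j,\qquad i=1,\dots,n .
\]

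Then the existence and uniqueness theorem for linear ODEs with smooth coefficients yields a unique solution $(J^1,\dots,J^n)$ defined on all of $[a,b]$ satisfying $J^i(a)=v^i$ and $\dot J^i(a)=w^i$; the associated $J=\sum_i J^i E_i$ is the desired Jacobi field, and it has $J(a)=v$, $J'(a)=w$. Uniqueness follows at once from linearity: if $J_1,J_2$ are Jacobi fields with $J_1(a)=J_2(a)$ and $J_1'(a)=J_2'(a)$, then $J_1-J_2$ is a Jacobi field with vanishing initial data, hence identically zero by the uniqueness part of the ODE theorem.

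I do not expect a real obstacle here; the only point requiring care is that the right-hand side of the Jacobi equation be pointwise linear in $J$ — without this it would not reduce to an ODE — and this is exactly the already-established fact that $R^\gamma(\dot\gamma,W)Z$ is $C^\infty([a,b])$-linear in $W$ when $Z=\dot\gamma$. I would also remark in passing that $\gamma$ need not be contained in a single chart, but the parallel frame, and therefore the whole argument, is global along $\gamma$.
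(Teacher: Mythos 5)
Your argument is correct and is exactly the one the paper intends: it cites the construction of a parallel orthonormal frame along $\gamma$ and says the proof is then the same as in the pseudo-Riemannian case (O'Neill, p.~217), which is precisely the reduction to a linear second-order ODE system that you carry out. You also correctly identify the one point that needs checking in the Finsler setting, namely the $C^\infty([a,b])$-linearity of $W\mapsto R^\gamma(\dot\gamma,W)\dot\gamma$, which the paper has already established.
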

Using a parallel orthonormal frame field along $\gamma$, the proof is the 
same as in the pseudo-Riemannian case; see, e.g., \cite[page 217]{oneill}.
%% \begin{proof}
%% Let $E_1,E_2,\ldots,E_n$ be a parallel orthonormal frame field of $\gamma$ as above. Then any vector field along $\gamma$ can be expressed as $Y=\sum_{i=1}^n y^iE_i$ for certain smooth  functions $y_i:[a,b]\rightarrow \R$,  $i=1,\ldots,n$. 
%% %Moreover, as $\dot\gamma$ itself is parallel, it follows easily for the almost metricity of the Chern connection that $\dot\gamma=a_i E_i$ for certain constants $a_i$, $i=1,\ldots,n$. 
%% Then,  by \eqref{linearR},  the Jacobi equation is equivalent to 
%% \begin{equation}\label{systemof}
%% \frac{d^2(y^m)}{dt^2}=\sum_{j=1}^n R_{j}^my^j
%% \end{equation}
%% with $m=1,\ldots,n$, where the coefficients are determined by the system \[R^\gamma(\dot\gamma,E_j)\dot\gamma=\sum_{m=1}^nR_{j}^mE_m\]
%% for $j=1,\ldots,n$. Moreover, the initial conditions are equivalent
%% to $y^i(a)=v^i$ and $\frac{dy^i}{dt}(a)=w^i$, where $v^i$ and $w^i$ are respectively the coordinates of $v$ and $w$ in the orthonormal basis
%% $E_1(a),\ldots,E_n(a)$. By the theory of ordinary differential equations, the system in \eqref{systemof} is uniquely determined by these initial conditions and the solution exists in the whole interval $[a,b]$.
%% \end{proof}
\begin{prop}
Let $p$ be a point of a pseudo-Finsler manifold $(M,L)$. If 
$v\in T_pM\cap A$ belongs to the domain of $\exp_p$, then for any 
$w\in T_v(T_pM)$ we have
\[d\exp_p(v)[w]=J(1),\]
where $J$ is the unique Jacobi field on $\gamma_v$ such that $J(0)=0$ and $J'(0)=w$.
\end{prop}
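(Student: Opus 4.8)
The plan is to identify the map $w \mapsto d\exp_p(v)[w]$ as a Jacobi field by exhibiting a geodesic variation whose variation vector field realizes it, and then apply Proposition~\ref{JacobVari} together with the uniqueness statement of Lemma~\ref{geodesicvariations}. Concretely, given $w \in T_v(T_pM)$, I would pick a curve $\sigma:(-\varepsilon,\varepsilon)\to \mathcal D_p \subset T_pM \cap A$ with $\sigma(0) = v$ and $\dot\sigma(0) = w$; since the domain of $\exp_p$ is open in $A \cap T_pM$ (by the proposition in subsection~2.5), such a $\sigma$ exists with image in $\mathcal D_p$, possibly after shrinking $\varepsilon$. Then define $\Lambda(t,s) = \exp_p(t\,\sigma(s)) = \gamma_{\sigma(s)}(t)$ for $(t,s) \in [0,1]\times(-\varepsilon,\varepsilon)$; again by homogeneity of geodesics ($\gamma_{\lambda v}(t) = \gamma_v(\lambda t)$) and openness of the domain, this is a well-defined smooth two-parameter map, and it is $L$-admissible after possibly shrinking $\varepsilon$, exactly as in the discussion preceding~\eqref{jacobiop}.

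The key observations are then: (1) each longitudinal curve $s \mapsto \Lambda(\cdot,s) = \gamma_{\sigma(s)}$ is a geodesic of $(M,L)$, so $\Lambda$ is a geodesic variation of $\gamma_v = \Lambda(\cdot,0)$, whence by Proposition~\ref{JacobVari} its variation vector field $J(t) := \Lambda_s(t,0)$ is a Jacobi field along $\gamma_v$; (2) the initial value $J(0) = \Lambda_s(0,0) = \frac{d}{ds}\big|_{s=0}\exp_p(0\cdot\sigma(s)) = \frac{d}{ds}\big|_{s=0} p = 0$; and (3) the endpoint value $J(1) = \Lambda_s(1,0) = \frac{d}{ds}\big|_{s=0}\exp_p(\sigma(s)) = d\exp_p(v)[\dot\sigma(0)] = d\exp_p(v)[w]$, by the chain rule. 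By Lemma~\ref{geodesicvariations}, a Jacobi field is determined by $J(0)$ and $J'(0)$, so to finish it remains to compute $J'(0) = D^{\dot\gamma_v}_{\gamma_v} J\,(0)$ and verify it equals $w$ (under the canonical identification $T_v(T_pM) \cong T_pM$).

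The main obstacle is precisely this last computation of $J'(0)$. Writing $J(t) = \Lambda_s(t,0) = \dot\beta_t(0)$ in the notation of subsection~\ref{curvature}, I would use the torsion-freeness identity~\eqref{commut}, namely $D^{\dot\gamma}_{\gamma_s}\dot\beta_t = D^{\dot\gamma}_{\beta_t}\dot\gamma_s$, to get $J'(t) = D^{\Lambda_t}_{\gamma_s}\Lambda_s\,(t,0) = D^{\Lambda_t}_{\beta_t}\Lambda_t\,(t,0)$. Evaluating at $t=0$: since $\Lambda_t(0,s) = \dot\gamma_{\sigma(s)}(0) = \sigma(s)$, the curve $s \mapsto \Lambda_t(0,s)$ is just $\sigma$ itself, and its covariant derivative along the constant curve $\beta_0 \equiv p$ reduces — by the local formula~\eqref{connection} with $\dot\gamma \equiv 0$ — to the ordinary derivative $\dot\sigma(0) = w$. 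This is the same device used in the proof of the submanifold lemma in subsection~\ref{submani}. Thus $J'(0) = w$, and by uniqueness $J$ coincides with the Jacobi field with data $J(0)=0$, $J'(0)=w$, giving $d\exp_p(v)[w] = J(1)$ as claimed. One should take care to state clearly the identification $T_v(T_pM) \cong T_pM$ being used so that "$J'(0) = w$" is meaningful.
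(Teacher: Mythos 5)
Your proof is correct and follows essentially the same route as the paper: both construct the geodesic variation $\Lambda(t,s)=\exp_p(t\,\sigma(s))$ (the paper simply takes $\sigma(s)=v+sw$), invoke Proposition~\ref{JacobVari}, and compute $J(0)=0$ and $J'(0)=w$ via the torsion-freeness identity \eqref{commut} and the fact that the covariant derivative along the constant curve at $p$ reduces to the ordinary derivative. Your extra care about the openness of $\mathcal D_p$ and the identification $T_v(T_pM)\cong T_pM$ is a welcome, if minor, refinement.
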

\begin{proof}
Consider the variation $\tilde{\Lambda}(t,s)=t(v+sw)$ for $0\leq t\leq 1$ and $s$ small enough, and define 
\[\Lambda(t,s)=\exp_p(\tilde{\Lambda}(t,s))=\gamma_{v+sw}(t).\]
By %Lemma \ref{geodesicvariations}, 
Proposition \ref{JacobVari}, the variation vector field $J(t)=\Lambda_s(t,0)$
($t\in[0,1]$) is a Jacobi field. Moreover, as the curve $s\rightarrow \Lambda(0,s)=p$ is constant, $J(0)=0$ and if we denote by $\beta$ the constant curve with value $p$ and  write $\gamma_s=\gamma_{v+sw}$, from \eqref{commut} we get
\[J'(0)=
D_{\beta}^{\dot\gamma_s}\dot\gamma_s(0)=w,\]
since $\dot\gamma_s(0)=v+sw$.
\end{proof}
Given a geodesic $\gamma:[a,b]\rightarrow M$ of a pseudo-Finsler manifold, 
let \linebreak$\dot\gamma^\perp:=\{v\in T_{\dot\gamma}M: g_{\dot\gamma}(\dot\gamma,v)=0\}$. When $L\circ \dot\gamma\not=0$,  we have the decomposition  
\begin{equation}\label{decom}
T_{\dot\gamma}M={\rm span}(\dot\gamma)\oplus \dot\gamma^\perp.\end{equation}
 Moreover, if $Y\in {\mathfrak X}(\gamma)$, let us denote by ${\rm tan}_{\gamma}(Y)$ and ${\rm nor}_{\gamma}(Y)$ the first and second projection in the decomposition \eqref{decom}, respectively.
 \begin{lemma}\label{derivcommut}
 With the above notation, if $L\circ \dot\gamma\not=0$, then 
$({\rm tan}_{\gamma}(Y))'={\rm tan}_{\gamma}(Y')$ and 
$({\rm nor}_{\gamma}(Y))'={\rm nor}_{\gamma}(Y')$.
 \end{lemma}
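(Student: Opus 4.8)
The plan is to decompose an arbitrary $Y\in\mathfrak X(\gamma)$ along \eqref{decom} as $Y={\rm tan}_\gamma(Y)+{\rm nor}_\gamma(Y)$, where ${\rm tan}_\gamma(Y)=f\,\dot\gamma$ for the smooth function $f:=g_{\dot\gamma}(Y,\dot\gamma)/(L\circ\dot\gamma)$ (this is where $L\circ\dot\gamma\neq 0$ is used, so that $f$ is well-defined and smooth), and ${\rm nor}_\gamma(Y)=Y-f\dot\gamma$. Differentiating with $D^{\dot\gamma}_\gamma$ and using that $\gamma$ is a geodesic, i.e. $\dot\gamma'=D^{\dot\gamma}_\gamma\dot\gamma=0$, we get $({\rm tan}_\gamma(Y))'=(f\dot\gamma)'=f'\dot\gamma$, which is manifestly in ${\rm span}(\dot\gamma)$, and $({\rm nor}_\gamma(Y))'=Y'-f'\dot\gamma$.

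The core computation is then to check that $f'=g_{\dot\gamma}(Y',\dot\gamma)/(L\circ\dot\gamma)$, i.e.\ that the tangential part of $Y'$ is exactly $f'\dot\gamma$. First I would note $L\circ\dot\gamma$ is constant along the geodesic $\gamma$ (this is recorded in the excerpt, via almost $g$-compatibility and Remark~2.9), so $f'=(g_{\dot\gamma}(Y,\dot\gamma))'/(L\circ\dot\gamma)$. Next, applying the almost $g$-compatibility formula \eqref{almostmetric} to the pair $Y,\dot\gamma$, we have $(g_{\dot\gamma}(Y,\dot\gamma))'=g_{\dot\gamma}(Y',\dot\gamma)+g_{\dot\gamma}(Y,\dot\gamma')+2C_{\dot\gamma}(D^{\dot\gamma}_\gamma\dot\gamma,Y,\dot\gamma)$; the last two terms vanish because $\dot\gamma'=0$ (geodesic). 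Hence $f'=g_{\dot\gamma}(Y',\dot\gamma)/(L\circ\dot\gamma)$, which is precisely the coefficient defining ${\rm tan}_\gamma(Y')$. Therefore $({\rm tan}_\gamma(Y))'=f'\dot\gamma={\rm tan}_\gamma(Y')$, and consequently $({\rm nor}_\gamma(Y))'=Y'-{\rm tan}_\gamma(Y')={\rm nor}_\gamma(Y')$.

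I do not expect a serious obstacle here; this is a routine consequence of writing out the projection coefficient explicitly and using that $\gamma$ is a geodesic together with almost $g$-compatibility and the constancy of $L\circ\dot\gamma$. The one point that requires a moment's care is precisely the role of the hypothesis $L\circ\dot\gamma\neq 0$: it is needed both so that the decomposition \eqref{decom} is actually a direct-sum decomposition and so that $f$ is a well-defined smooth function, and the argument would break down at a lightlike geodesic. Everything else is the same bookkeeping as in the pseudo-Riemannian case, with the extra Cartan-tensor term in \eqref{almostmetric} conveniently killed off by the geodesic equation.
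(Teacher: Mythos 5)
Your proof is correct and follows essentially the same route as the paper's: both differentiate $g_{\dot\gamma}(Y,\dot\gamma)$ via almost $g$-compatibility, use the geodesic equation to kill the Cartan term and to keep $({\rm tan}_\gamma(Y))'=f'\dot\gamma$ tangential, and invoke $L\circ\dot\gamma\neq 0$ to identify the tangential coefficient. Writing the projection coefficient $f$ out explicitly (and noting the constancy of $L\circ\dot\gamma$) is just a slightly more explicit rendering of the paper's argument.
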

 \begin{proof}
 It is enough to prove that $({\rm tan}_{\gamma}(Y))'={\rm tan}_{\gamma}(Y')$, since the other equality comes then from $Y={\rm tan}_{\gamma}(Y)+{\rm nor}_{\gamma}(Y)$. Observe that $g_{\dot\gamma}(Y,\dot\gamma)=g_{\dot\gamma}({\rm tan}_{\gamma}(Y),\dot\gamma)$. Using that $\gamma$ is a geodesic and the Chern connection is almost $g$-compatible, we get that
 $g_{\dot\gamma}(Y',\dot\gamma)=g_{\dot\gamma}(({\rm tan}_{\gamma}(Y))',\dot\gamma)$.
This concludes the proof because $g_{\dot\gamma}(\dot\gamma,\dot\gamma)=L(\dot\gamma)\not=0$.
 \end{proof}
\begin{lemma}\label{orthogJacobi}
Consider a vector field $J$ along a geodesic $\gamma:[a,b]\rightarrow M$. If $J$ is a Jacobi field, then:
\begin{enumerate}[(i)]
\item $J$ is tangent to $\gamma$ if and only if $J(t)=(a_1 t+a_2)\dot\gamma (t)$ with $a_1,a_2\in\R$;
\item the following statements are equivalent:
\begin{enumerate}
\item $g_{\dot\gamma}(\dot\gamma,J)=0$,
\item there exist real numbers $a,b$ such that %\linebreak
\[g_{\dot\gamma(a)}(\dot\gamma(a),J(a))=g_{\dot\gamma(b)}(\dot\gamma(b),J(b))=0,\]
\item there exists a real number $a$ such that %\linebreak
\[g_{\dot\gamma(a)}(\dot\gamma(a),J(a))=g_{\dot\gamma(a)}(\dot\gamma(a), J'(a))=0.\]
\end{enumerate}
\end{enumerate}
Moreover, if $\gamma$ is nonnull, that is, $L(\dot\gamma)\not=0$, then $J$ is a Jacobi field if and only if ${\rm nor}_{\gamma}(J)$ and ${\rm tan}_{\gamma}(J)$ are Jacobi fields.
\end{lemma}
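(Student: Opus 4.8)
The plan is to treat the three parts in turn, exploiting the fact that a Jacobi field is determined by its value and first covariant derivative at one point (Lemma~\ref{geodesicvariations}), that $R^\gamma(\dot\gamma,\,\cdot\,)\dot\gamma$ is tensorial in its middle slot (established earlier via \cite[Corollary~1.3]{J14}), and that along a geodesic the almost $g$-compatibility together with Remark~\ref{porvenir} makes $g_{\dot\gamma}(\dot\gamma,\,\cdot\,)$ behave like a parallel metric pairing.

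For $(i)$, first I would check the ``if'' direction: if $J=(a_1t+a_2)\dot\gamma$, then using that $\dot\gamma$ is parallel one computes $J'=a_1\dot\gamma$, $J''=0$, while $R^\gamma(\dot\gamma,J)\dot\gamma$ vanishes because $R^\gamma(\dot\gamma,\dot\gamma)\dot\gamma=0$ by antisymmetry (Lemma~\ref{Bzero}(iii) with $U=P=\dot\gamma$, or directly from $R^V$ antisymmetry), so $J$ is Jacobi. Conversely, if $J$ is Jacobi and tangent, write $J(t)=f(t)\dot\gamma(t)$ for a smooth function $f$; then $J'=f'\dot\gamma$ and $J''=f''\dot\gamma$, while the right-hand side $R^\gamma(\dot\gamma,J)\dot\gamma=f\,R^\gamma(\dot\gamma,\dot\gamma)\dot\gamma=0$ by tensoriality in the middle argument and antisymmetry; hence $f''=0$, so $f$ is affine.

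For $(ii)$, set $\varphi(t):=g_{\dot\gamma(t)}(\dot\gamma(t),J(t))$. Almost $g$-compatibility plus $D^{\dot\gamma}_\gamma\dot\gamma=0$ plus Remark~\ref{porvenir} gives $\varphi'=g_{\dot\gamma}(\dot\gamma,J')$ and then $\varphi''=g_{\dot\gamma}(\dot\gamma,J'')=g_{\dot\gamma}(\dot\gamma,R^\gamma(\dot\gamma,J)\dot\gamma)$, which is zero by Lemma~\ref{Bzero}(iii) (the skew-symmetry $g_{\dot\gamma}(R^\gamma(\dot\gamma,J)\dot\gamma,\dot\gamma)=-g_{\dot\gamma}(R^\gamma(\dot\gamma,J)\dot\gamma,\dot\gamma)$ forces it to vanish). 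So $\varphi$ is an affine function of $t$. Then (a)$\Rightarrow$(b) is trivial; (b) says $\varphi$ has two distinct zeros, hence $\varphi\equiv0$, giving (a); (a)$\Rightarrow$(c) is trivial since $\varphi\equiv0$ forces $\varphi(a)=\varphi'(a)=0$; and (c)$\Rightarrow$(a) because $\varphi(a)=\varphi'(a)=0$ together with $\varphi$ affine forces $\varphi\equiv0$. Thus all three are equivalent.

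For the last assertion, assume $L(\dot\gamma)\ne0$, so the decomposition \eqref{decom} is valid and by Lemma~\ref{derivcommut} the projections ${\rm tan}_\gamma$, ${\rm nor}_\gamma$ commute with $(\,\cdot\,)'$, hence with $(\,\cdot\,)''$. The ``if'' direction is clear since the Jacobi equation is linear. For ``only if'', suppose $J$ is Jacobi. Apply ${\rm tan}_\gamma$ to $J''=R^\gamma(\dot\gamma,J)\dot\gamma$; the left side is $({\rm tan}_\gamma J)''$, and the right side, being everywhere a multiple of $\dot\gamma$ (because $R^\gamma(\dot\gamma,J)\dot\gamma=R_{\dot\gamma}(\dot\gamma,J)\dot\gamma$ is $g_{\dot\gamma}$-orthogonal to $\dot\gamma$ by Lemma~\ref{Bzero}(iii)\,\dots wait: it is orthogonal, not parallel)\,---\,here the right observation is that $R^\gamma(\dot\gamma,J)\dot\gamma$ lies in $\dot\gamma^\perp$, so ${\rm tan}_\gamma(R^\gamma(\dot\gamma,J)\dot\gamma)=0$ and ${\rm nor}_\gamma(R^\gamma(\dot\gamma,J)\dot\gamma)=R^\gamma(\dot\gamma,J)\dot\gamma$. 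Therefore $({\rm tan}_\gamma J)''=0$, so ${\rm tan}_\gamma J=(a_1t+a_2)\dot\gamma$ is Jacobi by part~$(i)$; and $({\rm nor}_\gamma J)''=R^\gamma(\dot\gamma,J)\dot\gamma=R^\gamma(\dot\gamma,{\rm tan}_\gamma J+{\rm nor}_\gamma J)\dot\gamma=R^\gamma(\dot\gamma,{\rm nor}_\gamma J)\dot\gamma$ since $R^\gamma(\dot\gamma,\dot\gamma)\dot\gamma=0$ and $R^\gamma$ is tensorial and linear in the middle slot, so ${\rm nor}_\gamma J$ is Jacobi too. The main obstacle is bookkeeping the orthogonality $R^\gamma(\dot\gamma,J)\dot\gamma\in\dot\gamma^\perp$ and the tensoriality/linearity of $R^\gamma$ in its middle argument precisely enough to split the equation; once those are in hand the rest is routine.
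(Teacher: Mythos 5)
Your proposal is correct and follows essentially the same route as the paper: part (i) via $R^\gamma(\dot\gamma,\dot\gamma)\dot\gamma=0$ and tensoriality, part (ii) by showing $g_{\dot\gamma}(\dot\gamma,J)$ is affine using Lemma \ref{Bzero}(iii), and the splitting via Lemma \ref{derivcommut} together with the orthogonality of $R^\gamma(\dot\gamma,J)\dot\gamma$ to $\dot\gamma$. The only cosmetic differences are that you spell out the (trivial) converse implications and include a self-correction mid-argument; the substance matches the paper's proof.
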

\begin{proof}
For $ (i)$, observe that 
\begin{equation}\label{Rgamma}
R^\gamma(\dot\gamma,\dot\gamma)\dot\gamma=0
\end{equation}
 (see \cite[Theorem 2.2]{J14} and take into account that  $R^V$ is 
antisymmetric in the first two arguments). Then, for $J=f\dot\gamma$, the Jacobi equation reduces to $\frac{d^2f}{dt^2}=0$. To see $(ii)$, observe that
since $\gamma$ is a  geodesic,
\begin{equation}\label{RgammaJ}
(g_{\dot\gamma}(J,\dot\gamma))''=
g_{\dot\gamma}(J'',\dot\gamma)=g_{\dot\gamma}(R^\gamma(\dot\gamma,J)\dot\gamma,\dot\gamma)=0,
\end{equation}
%the last equality follows from \eqref{Rgamma} when $J$ is proportional to $\dot\gamma$ and from \cite[Theorem 2.2]{J14} when $J$ is linearly independent with $\dot\gamma$, since in this case we can choose two vector fields $V,U$ extensions of $\dot\gamma$ and $J$ such that $[V,U]=0$ and $g_V(R^V(V,U)V,V)=0$
% (this follows from the second symmetry of the curvature tensor in subsection~\ref{curvature} and  Lemma~\ref{Bzero}).  
 by part $(iii)$ of Lemma \ref{Bzero}.  Hence $g_{\dot\gamma}(J,\dot\gamma)=C_1t+C_2$, where $C_1$ and $C_2$ are real constants, and $g_{\dot\gamma}(J',\dot\gamma)=C_1$, thus $(ii)$ follows. For the last statement, observe that 
$R^\gamma(\dot\gamma,{\rm tan}_{\gamma}J)\dot\gamma=0$ and then 
$R^\gamma(\dot\gamma,J)\dot\gamma=R^\gamma(\dot\gamma,{\rm nor}_{\gamma}J)\dot\gamma$ (recall  that $R^\gamma(\dot\gamma,W)\dot\gamma$ is tensorial in $W$).  Using again that $g_{\dot\gamma}(R^\gamma(\dot\gamma,J)\dot\gamma,\dot\gamma)=0$ and Lemma \ref{derivcommut}, the Jacobi equation splits into the two equations 
\begin{align*}
({\rm tan}_{\gamma}J)''&=0&\text{and}&& ({\rm nor}_{\gamma}J)''&=R^\gamma(\dot\gamma,{\rm nor}_{\gamma}J)\dot\gamma. 
\end{align*}
Finally, applying part $(i)$, we conclude  the proof.
\end{proof}
\begin{prop}\label{symplectic}
If $J_1$ and $J_2$ are Jacobi fields along a geodesic $\gamma$, then
 the function  $g_{\dot\gamma}(J_1,J'_2)-g_{\dot\gamma}(J'_1,J_2)$ is constant.
\end{prop}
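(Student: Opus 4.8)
The plan is to show that the derivative of $f(t):=g_{\dot\gamma}(J_1,J_2')-g_{\dot\gamma}(J_1',J_2)$ vanishes identically. First I would differentiate each summand by means of the almost $g$-compatibility \eqref{almostmetric} of the induced covariant derivative; since $\gamma$ is a geodesic we have $D_\gamma^{\dot\gamma}\dot\gamma=0$, so by Remark \ref{porvenir} the Cartan correction terms of the form $2C_{\dot\gamma}(D_\gamma^{\dot\gamma}\dot\gamma,\cdot,\cdot)$ all disappear. After cancelling the two resulting copies of $g_{\dot\gamma}(J_1',J_2')$ this leaves
\[
f'(t)=g_{\dot\gamma}(J_1,J_2'')-g_{\dot\gamma}(J_1'',J_2).
\]
Inserting the Jacobi equation $J_k''=R^\gamma(\dot\gamma,J_k)\dot\gamma$ and using the symmetry of $g_{\dot\gamma}$, the claim reduces to the $g_{\dot\gamma}$-self-adjointness of the Jacobi operator $w\mapsto R^\gamma(\dot\gamma,w)\dot\gamma$, that is, to the identity
\[
g_{\dot\gamma}(R^\gamma(\dot\gamma,J_1)\dot\gamma,J_2)=g_{\dot\gamma}(R^\gamma(\dot\gamma,J_2)\dot\gamma,J_1).
\]

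To obtain this identity I would invoke the curvature symmetry \eqref{seisB} (item (iv) of subsection \ref{curvature}) with the choice $X=Z=\dot\gamma$, $Y=J_1$, $W=J_2$, which is legitimate along $\gamma$ because, $\gamma$ being a geodesic, $R^\gamma(\dot\gamma,\cdot)\dot\gamma$ coincides with the Chern curvature operator $R_{\dot\gamma}$ (see subsection \ref{curvature} and \cite{J13,J14}). With this substitution the left-hand side of \eqref{seisB} becomes exactly the difference we want to annihilate, while on the right-hand side each of the six $B^{\dot\gamma}$-terms has either $\dot\gamma$ in its third (or fourth) slot or $\dot\gamma$ in at least two slots, hence vanishes by Lemma \ref{Bzero}(i)--(ii); so the difference is zero. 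Alternatively, one could first rewrite $f'(t)$ via Lemma \ref{Bzero}(iii), then use the first Bianchi identity (item (iii) of subsection \ref{curvature}) to turn $R^\gamma(\dot\gamma,J_1)J_2-R^\gamma(\dot\gamma,J_2)J_1$ into $-R^\gamma(J_1,J_2)\dot\gamma$, and close with $g_{\dot\gamma}(R^\gamma(J_1,J_2)\dot\gamma,\dot\gamma)=B^{\dot\gamma}(J_1,J_2,\dot\gamma,\dot\gamma)=0$ (symmetry (ii) together with Lemma \ref{Bzero}(i)).

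The differentiation and the cancellation of the mixed terms are routine; the one genuine point is the self-adjointness of the Jacobi operator, which is precisely where the symmetries of the Chern curvature and the vanishing of the $B$-tensor in these special configurations (Lemma \ref{Bzero}) are really used. The only mildly delicate bookkeeping is to check that the curvature symmetries of subsection \ref{curvature}, originally stated for the affine connection $\nabla^V$ on an open set, may be transported to the operator $R^\gamma$ along $\gamma$; this is the kind of transfer carried out in \cite{J14}, and in any case $R^\gamma(\dot\gamma,W)\dot\gamma$ is tensorial in $W$, so one may argue pointwise after extending $\dot\gamma(t_0)$ to a local $L$-admissible vector field.
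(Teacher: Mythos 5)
Your proposal is correct and follows the paper's argument essentially step for step: differentiate using almost $g$-compatibility, reduce to the self-adjointness identity $g_{\dot\gamma}(J_1,R^\gamma(\dot\gamma,J_2)\dot\gamma)=g_{\dot\gamma}(R^\gamma(\dot\gamma,J_1)\dot\gamma,J_2)$, and then annihilate the right-hand side of \eqref{seisB} (with $X=Z=\dot\gamma$, $Y=J_1$, $W=J_2$) using Lemma \ref{Bzero}. The only point where the paper is more careful than you is the transfer of \eqref{seisB}, which is stated for $R^V$ on an open set, to the operator $R^\gamma$ along the curve: the paper handles this by choosing extensions with $[U_1,V]=[U_2,V]=0$ via \cite[Theorem 2.2]{J14} and treating the case where some $J_i$ is proportional to $\dot\gamma$ separately through \eqref{Rgamma} and \eqref{RgammaJ} plus a continuity argument, whereas your appeal to tensoriality in $W$ and the identification with the Chern curvature is a slightly looser, but essentially equivalent, way of doing the same bookkeeping.
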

\begin{proof}
Using that $\gamma$ is a geodesic, that the Chern connection is almost $g$-compatible and that both $J_1$ and $J_2$ satisfy the Jacobi equation, we obtain
\begin{align}\label{eqJacconst}
( g_{\dot\gamma}(J_1,J'_2)-g_{\dot\gamma}(J'_1,J_2))'=&
g_{\dot\gamma}(J'_1,J'_2)+
g_{\dot\gamma}(J_1,J''_2)
\nonumber\\\,\,&-g_{\dot\gamma}(J'_1,J'_2)
-g_{\dot\gamma}(J''_1,J_2)\nonumber\\
=&g_{\dot\gamma}(J_1,R^\gamma(\dot\gamma,J_2)\dot\gamma)-g_{\dot\gamma}(R^\gamma(\dot\gamma,J_1)\dot\gamma,J_2).
\end{align}
 Let us see that the last quantity is zero. If one of the vector fields $J_1$, $J_2$ is proportional to $\dot\gamma$  in some interval,  then it follows from \eqref{Rgamma} and \eqref{RgammaJ}. Otherwise, if both of them are linearly independent to $\dot\gamma$ you can choose extensions $V$, $U_1$ and $U_2$ of $\dot\gamma$, $J_1$ and $J_2$, respectively, such that  $[U_1,V]=[U_2,V]=0$  and
\begin{multline*}
g_{\dot\gamma}(J_1,R^\gamma(\dot\gamma,J_2)\dot\gamma)-g_{\dot\gamma}(R^\gamma(\dot\gamma,J_1)\dot\gamma,J_2)=\\
g_{V}(U_1,R^V(V,U_2)V)-g_{V}(R^V(V,U_1)V,U_2)
\end{multline*}
(see \cite[Theorem 2.2]{J14}), but the last quantity is zero along $\gamma$ because of  \eqref{seisB} and  part $(ii)$ of 	Lemma \ref{Bzero}.  By continuity, we conclude that \eqref{eqJacconst} is zero everywhere. 
\end{proof}
\subsection{Remarks about Morse theory}
Let us observe that in principle, there should not be further obstructions to prove that geodesics of a pseudo-Finsler metric are critical points of the energy functional in a suitable infinite dimensional $H^1$-Sobolev space, for example, by generalizing the proof in \cite[Proposition 2.1]{CaJaMa11}. But in order to make Morse theory available, we need to overcome several problems. The first one is that  the Palais--Smale  condition only holds in general when the pseudo-Finsler metric is in fact a Finsler metric (with positive definite fundamental tensor), since it is well-known that Palais-Smale condition fails for semi-Riemannian metrics. The second problem is the differentiability of the energy function in the $H^1$ Sobolev space, because it is $C^2$ only when the pseudo-Finsler metric is semi-Riemannian (see \cite[Proposition 3.2]{AbbSch09} and \cite{Cap10}). This has been overcome in the case of Finsler metrics using that the energy functional is $C^2$ in the $C^1$-topology (see \cite{CJM10b,CJM10c}). The third problem is that when $A$ is strictly contained in $TM\setminus \bf 0$, the space of $L$-admissible curves can be non-complete, thus it seems interesting to study conditions of completeness in the pseudo-Finsler metric to guarantee the validity of the Morse theory as in \cite{CJM10b,CJM10c}. In \cite{CJS13} some results of geodesic connectedness of conic Finsler metrics are deduced using causality of spacetimes endowed with a Killing vector.  In the general case, when the fundamental tensor is allowed to have any signature, Lemma \ref{symplectic} is the key point to develop a relation between the spectral flow of a certain path of operators and the Maslov index of conjugate points as in \cite{PPT04}. In the presence of a Killing vector field, more precise results have been obtained in the Lorentzian realm \cite{CFS08,GP99,JMP10} and it is expectable to get similar results for Lorentzian Finsler metrics.

\end{document}